\numberwithin{equation}{section}
\newtheorem{Theorem}{Theorem}[section]
\newtheorem{lemma}[Theorem]{Lemma}
\newcommand{\ee}{\text{e}}
\DeclareMathAlphabet      {\mathbfit}{OML}{cmm}{b}{it}
\begin{document}
\date{\small\textsl{\today}}
\title{Analysis of a Legendre spectral element method (LSEM) for  the two-dimensional  system of a nonlinear stochastic advection-reaction-diffusion models}
\author{
 \small{Mostafa Abbaszadeh}$^{\mbox{\small \em a}}\vspace{.5cm}$,~\small{Mehdi Dehghan}$^{\mbox{\small \em a}}$\footnote{Corresponding author. {\em E-mail addresses:}
 	m.abbaszadeh@aut.ac.ir (M.Abbaszadeh), mdehghan@aut.ac.ir, mdehghan.aut@gmail.com (M. Dehghan),  amirreza.khodadadian@ifam.uni-hannover.de (A. Khodadadian), thomas.wick@ifam.uni-hannover.de (T.Wick). },~\small{Amirreza Khodadadian}$^{\mbox{\small \em b}}$,~
\small{Thomas Wick}$^{\mbox{\small \em b}}$  \\
\small{\em $^{\mbox{\footnotesize a}}$\em Department of Applied
Mathematics, Faculty of Mathematics
and Computer Sciences,}\vspace{-3mm}\\
\small{\em Amirkabir University of Technology, No. 424, Hafez
Ave., 15914, Tehran, Iran}\vspace{-0.5mm}\\
\small{\em $^{\mbox{\footnotesize b}}$\em Institute of Applied Mathematics, Leibniz University of Hannover }\vspace{-3mm} \\
\small{\textit{  Welfengarten 1, 30167 Hanover, Germany}}\\
}
\maketitle

\vspace{-1cm}
\noindent \noindent\linethickness{.5mm}{\line(1,0){540}}
\begin{abstract}
In this work, we develop a Legendre spectral element method  (LSEM)   for solving the   stochastic nonlinear system of advection-reaction-diffusion models. The used basis functions are based on a class of Legendre functions such that their mass and diffuse matrices are  tridiagonal  and diagonal, respectively. The temporal variable is discretized by a Crank--Nicolson finite difference formulation.  In the stochastic direction, we also employ a random variable $W$ based on the $Q-$Wiener process. We inspect  the rate of convergence and the unconditional stability for the achieved semi-discrete formulation.
 Then, the Legendre spectral element technique is used to obtain a full-discrete scheme. The error estimation of the proposed numerical scheme is substantiated based upon the energy method. The numerical results confirm the theoretical analysis.   \\ \\
\textbf{Keywords}: Nonlinear system of advection-reaction-diffusion equation, error estimate, spectral element method (SEM), stochastic PDEs, .
\vspace{.1cm}\\ \\
\textbf{\textit{AMS subject Classification}:} 65M70, 34A34.
\end{abstract}
\section{Introduction}
We consider the   stochastic nonlinear system of advection-reaction-diffusion models  \cite{Kobus,W.Liu}
\begin{equation}\label{1gw}
\left\{ \begin{array}{l}
\displaystyle du + \Big( \xi ({ \mathbfit{x}})\nabla u - \nabla \cdot {\zeta({ \mathbfit{x}})\nabla u}  + {w_p}{e_1}f(u,v)\Big)  dt = dW, \qquad \hspace{2.5cm} \text{in} ~D\times (0,T],\\
\\
\displaystyle dv + \Big(   \xi ({ \mathbfit{x}})\nabla v - \nabla \cdot\left( {\zeta({ \mathbfit{x}})\nabla v} \right) + {w_p}{e_2}f(u,v) \Big) dt = dW, \hspace{3.1cm} \text{in} ~D\times (0,T],\\
\\
\displaystyle dw + \Big(  \xi
({ \mathbfit{x}})\nabla w - \nabla \cdot\left( {\zeta({ \mathbfit{x}})\nabla w}
\right) + {w_p}{e_3}f(u,v) + r({ \mathbfit{x}})w \Big)  = dW,\hspace{1.6cm} \text{in} ~D\times (0,T],
\end{array} \right.
\end{equation}
where $u$, $v$ and $w$ denote  the concentrations of the main ground substance, aqueous solution electrolyte and microorganism, respectively \cite{Kobus,W.Liu}. In the above model $r(x)$ is a known function, $\xi$ is the advection coefficient, $\zeta$ is the diffusion coefficient,  $e_i$ and $w_p$ are constant, respectively.  Also, $W$ is a $Q$-Wiener process  with respect
to a filtered probability space $(\Omega,\mathcal{F},\mathbb{P})$.
The nonlinear terms are
\[f(u,v) = g(u,v) = h(u,v) = \frac{u}{{{\kappa _1} + u}} + \frac{v}{{{\kappa _2} + v}}.\]
Predictions of solute transport in aquifers generally have to rely on mathematical models  based on groundwater 
flow and convection-dispersion equations. 
The groundwater model is employed to prevent and control the groundwater contaminant with the microbiological technology
\cite{W.Liu}. 
 Several scholars investigated Eq.  \eqref{1gw} for example using an improved finite element approach \cite{W.Liu}, meshless local approaches \cite{ilati2016remediation,Sarra1},  lattice Boltzmann technique \cite{budinski2015lattice}, a front-tracking method \cite{d2007numerical},  novel WENO methods \cite{Jiang}, or a finite element method \cite{B.Liu}. The interested readers can refer to \cite{wang2016estimating,zhang2017study} to get more information for Eq. \eqref{1gw}.

 In the past, the groundwater models have been based only on deterministic considerations.
 In practice, aquifers are generally heterogeneous, i.e., their hydraulic 
 properties (e.g., permeability)  change in space.  These variations  are irregular and characterized by length  
 scales significantly larger than the pore scale. These spatial 
 fluctuations cause the flow variables  such as 
 concentration to change  in space in an irregular manner. Therefore, a reliable description of the groundwater model
 can be explained only in a stochastic form  \cite{unny1989stochastic}.

The first stochastic equation can be rewritten as
\begin{align}\label{sode}
d u(t)= \Big( A u(t)+f(u)\Big)  dt + dW,
\end{align}
where $-A:\mathcal{D}(-A)\subset H\to H$ is a linear, self-adjoint, positive definite operator where the domain $\mathcal{D}$ is dense in $H$ and compactly embedded in $H$ (i.e., $L^2(D)$)
 and the semigroup $\text{e}^{tA}$ $(t\geq0)$ is generated by $-A$. Additionally, we assume that $f:H\to H $ satisfies the linear growth condition and is twice continuously Frechet differentiable with bounded derivatives up to order 2 \cite{wang2013weak}. The initial value $u(0)=u_0$ is deterministic as well. Therefore, \eqref{sode} has a continuous mild solution \cite{haji2016multi}
\begin{align}
\label{mild}
u(t)=\text{e}^{tA} u_0+\int_{0}^{t} \text{e}^{(t-s)A}f(u(s))~\text{d}s+  \int_{0}^{t} \text{e}^{(t-s)A}~\text{d}W(s),
\end{align} 
where for $t\in [0,T]$ and $u:[0,T]\times D\rightarrow H$. Regarding the expected value of the solution, we can assume that  $\mathbb{E}\|u(t)\|^2 \leq \infty$. The same mild solutions can be employed for $v$ and $w$.

The deterministic case of Eq. \eqref{1gw} has been studied by some scholars for example a new finite volume method  \cite{W.Liu}, new Krylov  WENO methods \cite{Jiang}, local radial basis function collocation method \cite{Y.Hon}, etc. Also, the SEM is applied to solve some important problems such as the Schr\"{o}dinger equations \cite{F.Zeng5}, Pennes bioheat transfer model \cite{DehghanS}, the shallow water equations \cite{GIRALDO},  integral differential equations \cite{Fakhar1,Fakhar2,Fakhar3}, hyperbolic scalar equations \cite{Maerschalck}, predator-prey problem \cite{DehghanS1},  some problems in the finance mathematics \cite{Zhu1, Zhu2} and so forth.
 \\ \\
 The main aim of the current paper is to propose a new high-order numerical procedure for solving the   two-dimensional  system of a nonlinear stochastic advection-reaction-diffusion models. The used technique is based on the modified Legendre spectral element procedure. The coefficient matrix of the employed technique is more well-posed than the traditional Legendre spectral element method.  The structure  of this article is as follows. In Section \ref{section2}, we propose and analysis the time-discrete scheme. In Section \ref{section3}, we develop the new numerical technique and analysis it.  We check the
 numerical results to solve the considered model in Section \ref{section4}.  Finally, a brief conclusion of the current paper is
 written in Section \ref{Section5}.


\section{Temporal discretization}
\label{section2}
First of all, we briefly review some important notations used in the paper. Considering $\Omega\subset \mathbb{R}^d$, we define the following functional spaces
\[\begin{array}{l}
 {L^2}(\Omega ) = \left\{ {f:\,\,\,\,\,\int\limits_\Omega  {{f^2}d\Omega }  < \infty } \right\},\\ \\
 {H^1}(\Omega ) = \left\{ {f \in {L^2}(\Omega ),\,\,\,\nabla f \in {L^2}(\Omega )} \right\}, \\
  \\
 H_0^1(\Omega ) = \left\{ {f \in {H^1}(\Omega ),\,\,{{\left. f \right|}_{\partial \Omega }} = 0} \right\}, \\
 \\
 {H^k}(\Omega ) = \left\{ {f \in {L^2}(\Omega ),\,\,\,\,\,\,{D^\beta }f \in {L^2}(\Omega )\,\,\,for\,\,all\,\,\left| \beta  \right| \le k} \right\},
 \end{array}\]
and the derivative
\[{D^\alpha }f = \left( {\frac{{{\partial ^{{\alpha _1}}}f}}{{\partial x_1^{{\alpha _1}}}}} \right)\left( {\frac{{{\partial ^{{\alpha _2}}}f}}{{\partial x_2^{{\alpha _2}}}}} \right) \ldots \left( {\frac{{{\partial ^{{\alpha _p}}}f}}{{\partial x_p^{{\alpha _p}}}}} \right),\hspace{1cm}\left| \alpha
\right| = \sum\limits_{i = 1}^p {{\alpha _i}}.\]
The corresponding  inner products for $L^2(\Omega)$ and $H^1(\Omega)$ are as follows
\[\left( {f,g} \right)  = \int\limits_\Omega  {f(x)g(x)d\Omega } ,\,\,\,\,\,\,\,\,\,\,\,\,\,\,\,\,\,\,\,\,\,\,{\left( {f,g} \right) _1} = \left( {f,g} \right)  + \left( {\nabla f,\nabla g} \right) ,\]
and the associated norms   are
\[{\left\| f \right\|_{{L^2}(\Omega )}} = {\left( {f,f} \right) ^{\frac{1}{2}}},\,\,\,\,\,\,\,\,\,\,\,\,\,\,\,\,\,\,{\left\| f \right\|_{{H^1}(\Omega )}} = \left( {f,f} \right) _1^{\frac{1}{2}},\,\,\,\,\,\,\,\,\,\,\,\,\,\,\,\,\,\,\,{\left| f \right|_1} = {\left( {\nabla f,\nabla f} \right) ^{\frac{1}{2}}}.\]
Furthermore, associated norm for the space $H^m$ is as
\[{\left\| f \right\|_{{H^m}(\Omega )}} = {\left( {\sum\limits_{0 \le \left| \alpha  \right| \le m} {\left\| {{D^\alpha }f} \right\|_{{L^2}(\Omega )}^2} } \right)^{\frac{1}{2}}}.\]
To discretize the time variable, we define
  $${t_{n}} = n\tau,\qquad\forall~ n = 0,1, \ldots ,N,$$ where
$\tau= T/N$ is the step size. We introduce additionally
\[v_{}^{n - \frac{1}{2}} = v(x,y,{t_{n - \frac{1}{2}}}) = \frac{1}{2}\left( {v_{}^n + v_{}^{n - 1}} \right),\qquad{\delta _t}v_{}^{n - \frac{1}{2}} = \frac{1}{\tau }\left( {v_{}^n - v_{}^{n - 1}} \right),\qquad{v^n} = v(x,y,{t_n}).\]
The Crank-Nicolson scheme for problem \eqref{1gw} is as follows
\begin{equation}\label{1s}
\left\{ {\begin{array}{*{20}{l}}
{\displaystyle\frac{{\partial {u^{n - \frac{1}{2}}}}}{{\partial t}} + \xi ({ \mathbfit{x}})\nabla {u^{n - \frac{1}{2}}} - \nabla \cdot\left( {\zeta ({ \mathbfit{x}})\nabla {u^{n - \frac{1}{2}}}} \right) + {w_p}{e_1}f\left( {{u^{n - \frac{1}{2}}},{v^{n - \frac{1}{2}}}} \right) = \dot{W},}\\
{}\\
{\displaystyle\frac{{\partial {v^{n - \frac{1}{2}}}}}{{\partial t}} + \xi ({ \mathbfit{x}})\nabla {v^{n - \frac{1}{2}}} - \nabla \cdot\left( {\zeta({ \mathbfit{x}})\nabla {v^{n - \frac{1}{2}}}} \right) + {w_p}{e_2}f\left( {{u^{n - \frac{1}{2}}},{v^{n - \frac{1}{2}}}} \right) = \dot{W},}\\
{}\\
{\displaystyle\frac{{\partial {w^{n - \frac{1}{2}}}}}{{\partial t}} + \xi ({ \mathbfit{x}})\nabla {w^{n - \frac{1}{2}}} - \nabla \cdot\left( {\zeta({ \mathbfit{x}})\nabla {w^{n - \frac{1}{2}}}} \right) + {w_p}{e_3}f\left( {{u^{n - \frac{1}{2}}},{v^{n - \frac{1}{2}}}} \right) + r({ \mathbfit{x}}){w^{n - \frac{1}{2}}} = \dot{W},}
\end{array}} \right.
\end{equation}
where $C$ is a positive constant such that
$\left| {R_V^\tau } \right|{\rm and} \left| {R_B^\tau } \right| \le C{\tau ^2}.$
Discretizing relation \eqref{1s} yields
\begin{equation}\label{2s}\fontsize{10}{18}
\left\{ {\begin{array}{*{20}{l}}
{\displaystyle\frac{{{u^n} - {u^{n - 1}}}}{{\tau}} + \xi ({ \mathbfit{x}})\left[ {\frac{{\nabla {u^n} + \nabla {u^{n - 1}}}}{2}} \right] - \nabla \cdot\left[ {\zeta ({ \mathbfit{x}})\left( {\frac{{\nabla {u^n} + \nabla {u^{n - 1}}}}{2}} \right)} \right] + {w_p}{e_1}f\left( {{u^{n - \frac{1}{2}}},{v^{n - \frac{1}{2}}}} \right) = \frac{W^n-W^{n-1}}{\tau},}\\
{}\\ \\
{\displaystyle\frac{{{v^n} - {v^{n - 1}}}}{{\tau}} + \xi ({ \mathbfit{x}})\left[ {\frac{{\nabla {v^n} + \nabla {v^{n - 1}}}}{2}} \right] - \nabla \cdot\left[ {\zeta ({ \mathbfit{x}})\left( {\frac{{\nabla {v^n} + \nabla {v^{n - 1}}}}{2}} \right)} \right] + {w_p}{e_2}f\left( {{u^{n - \frac{1}{2}}},{v^{n - \frac{1}{2}}}} \right) = \frac{W^n-W^{n-1}}{\tau},}\\
{}\\ \\
\begin{array}{l}
\displaystyle\frac{{{w^n} - {w^{n - 1}}}}{{\tau}} + \xi ({ \mathbfit{x}})\left[ {\frac{{\nabla {w^n} + \nabla {w^{n - 1}}}}{2}} \right] - \nabla \cdot\left[ {\zeta ({ \mathbfit{x}})\left( {\frac{{\nabla {w^n} + \nabla {w^{n - 1}}}}{2}} \right)} \right]\vspace{0.2cm}
\\ 
\hspace{6.2cm}+ {w_p}{e_3}f\left( {{u^{n - \frac{1}{2}}},{v^{n - \frac{1}{2}}}} \right) + r({ \mathbfit{x}})\left[ {\displaystyle\frac{{{w^n} + {w^{n - 1}}}}{2}} \right] = {\displaystyle\frac{W^n-W^{n-1}}{\tau}},\\
\end{array}
\end{array}} \right.
\end{equation}
or
\begin{equation}\label{3s}
\left\{ {\begin{array}{*{20}{l}}
\begin{array}{l}
{u^n} +\displaystyle \frac{{\tau}}{2}\xi ({ \mathbfit{x}})\nabla {u^n} - \frac{{\tau}}{2}\nabla \cdot\left[ {\zeta ({ \mathbfit{x}})\nabla {u^n}} \right]+W^n\\
\\
\,\,\,\,\,\,\,\,\,\,\,\,\,\,\,\,\,\,\,\,\,\,\,\, = {u^{n - 1}} - \displaystyle\frac{{\tau}}{2}\xi ({ \mathbfit{x}})\nabla {u^{n - 1}} + \frac{{\tau}}{2}\nabla \cdot\left[ {\zeta ({ \mathbfit{x}})\nabla {u^{n - 1}}} \right] - \tau{w_p}{e_1}f\left( {{u^{n - 1}},{v^{n - 1}}} \right)+W^{n-1},\\
\\
{v^n} + \displaystyle\frac{{dt}}{2}\xi ({ \mathbfit{x}})\nabla {v^n} - \frac{{\tau}}{2}\nabla \cdot\left[ {\zeta ({ \mathbfit{x}})\nabla {v^n}} \right]+W^n\\
\\
\,\,\,\,\,\,\,\,\,\,\,\,\,\,\,\,\,\,\,\,\,\,\,\, = {v^{n - 1}} -\displaystyle \frac{{dt}}{2}\xi ({ \mathbfit{x}})\nabla {v^{n - 1}} + \displaystyle\frac{{dt}}{2}\nabla \cdot\left[ {\zeta ({ \mathbfit{x}})\nabla {v^{n - 1}}} \right] - dt{w_p}{e_2}f\left( {{u^{n - 1}},{v^{n - 1}}} \right)+W^{n-1},
\end{array}\\
{}\\
\begin{array}{l}
\left( {1 +\displaystyle \frac{{dt}}{2}r({ \mathbfit{x}})} \right){w^n} + \frac{{dt}}{2}\xi ({ \mathbfit{x}})\nabla {w^n} -\displaystyle \frac{{dt}}{2}\nabla \cdot\left[ {\zeta ({ \mathbfit{x}})\nabla {w^n}} \right]+W^n\\
\\
 = \left( {1 -\displaystyle \frac{{dt}}{2}r({ \mathbfit{x}})} \right){w^{n - 1}} -\displaystyle \frac{{dt}}{2}\xi ({ \mathbfit{x}})\nabla {w^{n - 1}} + \frac{{dt}}{2}\nabla \cdot\left[ {\zeta ({ \mathbfit{x}})\nabla {w^{n - 1}}} \right] - dt{w_p}{e_3}f\left( {{u^{n - 1}},{v^{n - 1}}} \right)+W^{n-1}
\end{array}
\end{array}} \right.
\end{equation}
The vector-matrix configuration of Eq.  \eqref{3s} is
\begin{equation}\label{6sss}
 \mathbfit{H}_1{{\bm{{\cal U}}}^n} + \frac{{\tau}}{2}{\mathbfit{I}}\nabla {{\bm{{\cal U}}}^n} - \frac{{\tau}}{2}{\mathbfit{I}}\nabla  \cdot \zeta ({\mathbfit{x}})\nabla {{\bm{{\cal U}}}^n}+W^{n} = {\mathbfit{H}}_2{{ \bm{{\cal U}}}^{n - 1}} - \frac{{\tau}}{2}{\mathbfit{I}}\nabla {{\bm{{\cal U}}}^{n - 1}} + \frac{{\tau}}{2}{\mathbfit{I}}\nabla  \cdot \zeta ({\mathbfit{x}})\nabla {{\bm{{\cal U}}}^{n - 1}} - \tau{\mathbfit{NF}}\left( {{{\bm{{\cal U}}}^{n - 1}}} \right)+W^{n-1},
\end{equation}
where $\bf{I}$ is the identity matrix and
\begin{equation}\label{7s}
{\mathbfit{H}_1} = \rm{diag}\left( {1,1,1 + \frac{\tau }{2}r(\mathbfit{x})} \right),\,\,\,\,\,{\mathbfit{H}_2} = \rm{diag}\left( {1,1,1 - \frac{\tau }{2}r(\mathbfit{x})} \right),\,\,\,\,{\mathbfit{N}} = \rm{diag}\left( {{w_p}{e_1},{w_p}{e_2},{w_p}{e_3}} \right),\,\,\,\,\,
\end{equation}
and also the unknown vector is ${\bm{{\cal U}}}=(u,v,w)$.
\subsection{Error analysis of the semi-discrete formulation}
\begin{Theorem}\label{Th1}
If $\bm{{\cal U}}^n\in { \mathbfit H}_0^1(\Omega)$, then relation \eqref{6sss} will be
unconditionally stable.
\end{Theorem}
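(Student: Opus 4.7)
The plan is to establish stability via the standard energy method applied to each of the three equations in \eqref{3s} and then summed.

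First, I would rewrite the Crank--Nicolson step for $u$ in the equivalent midpoint form $\delta_t u^{n-1/2} + \xi(\mathbfit{x})\nabla u^{n-1/2} - \nabla\cdot(\zeta(\mathbfit{x})\nabla u^{n-1/2}) + w_p e_1 f(u^{n-1},v^{n-1}) = \tau^{-1}(W^n-W^{n-1})$, and take the $L^2$ inner product with $u^{n-1/2}=\tfrac12(u^n+u^{n-1})$. The key algebraic identity $(u^n-u^{n-1},u^n+u^{n-1}) = \|u^n\|^2_{L^2} - \|u^{n-1}\|^2_{L^2}$ converts the time-difference term into a telescoping energy. Integration by parts on the diffusion term, together with the homogeneous boundary condition implied by $u^n \in H_0^1(\Omega)$, produces the nonnegative dissipation $(\zeta \nabla u^{n-1/2},\nabla u^{n-1/2}) \ge \zeta_{\min}|u^{n-1/2}|_1^2$. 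I would repeat the same manipulation for the $v$ and $w$ equations (testing $w$ against $w^{n-1/2}$ produces an additional nonnegative contribution $(r\,w^{n-1/2},w^{n-1/2})$ provided $r \ge 0$, otherwise it is absorbed into the Gronwall step).

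Next, I would bound the remaining terms. The advection term $(\xi\nabla u^{n-1/2},u^{n-1/2})$ is handled by Cauchy--Schwarz together with Young's inequality $ab \le \tfrac{\epsilon}{2}a^2 + \tfrac{1}{2\epsilon}b^2$, absorbing $\epsilon |u^{n-1/2}|_1^2$ into the diffusion contribution and leaving an $O(\|u^{n-1/2}\|^2_{L^2})$ remainder. The nonlinear term is controlled using the assumption stated after \eqref{sode} that $f$ has bounded Frechet derivatives, hence is Lipschitz: $\|f(u,v)\|_{L^2}\le C(1+\|u\|_{L^2}+\|v\|_{L^2})$. Young's inequality then bounds $(w_pe_1 f(u^{n-1},v^{n-1}),u^{n-1/2})$ by $C(1+\|u^{n-1}\|^2+\|v^{n-1}\|^2+\|u^n\|^2)$. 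Finally, the stochastic increment is treated in the mean-square sense: by the isometry for the $Q$-Wiener process, $\mathbb{E}\|W^n-W^{n-1}\|_{L^2}^2 \le C_Q \tau$, so $\mathbb{E}(\tau^{-1}(W^n-W^{n-1}),u^{n-1/2})$ is again dominated by $\tfrac{C_Q}{2} + \tfrac12 \mathbb{E}\|u^{n-1/2}\|^2$.

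Adding the three estimates and multiplying by $2\tau$ yields, after taking expectations,
\begin{equation*}
\mathbb{E}\bigl(\|u^n\|^2+\|v^n\|^2+\|w^n\|^2\bigr) - \mathbb{E}\bigl(\|u^{n-1}\|^2+\|v^{n-1}\|^2+\|w^{n-1}\|^2\bigr) \le C\tau\bigl(1 + \mathbb{E}\|u^n\|^2+\mathbb{E}\|v^n\|^2+\mathbb{E}\|w^n\|^2 + \text{previous step}\bigr).
\end{equation*}
Summing over $n=1,\dots,m$ and invoking the discrete Gronwall inequality (valid for all $\tau$ sufficiently small so that $1-C\tau>0$, which is a constant-type condition and not a CFL restriction) gives the uniform bound $\mathbb{E}(\|u^m\|^2+\|v^m\|^2+\|w^m\|^2) \le C(T)\bigl(\|u^0\|^2+\|v^0\|^2+\|w^0\|^2 + 1\bigr)$, proving unconditional stability.

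The main obstacle I anticipate is the correct treatment of the stochastic forcing, because the scheme as written in \eqref{3s} uses the pointwise symbol $\dot W$ that is not well-defined; the proof must interpret the increments $W^n-W^{n-1}$ in the $L^2(\Omega;H)$ sense using the $Q$-Wiener covariance, and this requires taking expectations \emph{before} applying Cauchy--Schwarz so that the cross term with $u^{n-1/2}$ can be controlled by a deterministic constant times $\tau$. A secondary subtlety is that the nonlinear term is evaluated at $t_{n-1}$ in \eqref{3s} (an IMEX treatment) rather than at $t_{n-1/2}$, which is actually helpful: it turns the Lipschitz bound into a purely explicit estimate that does not couple the implicit part of the scheme, so unconditional stability survives without any restriction linking $\tau$ to the Lipschitz constant.
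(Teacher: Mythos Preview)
Your argument and the paper's prove different statements, and your version contains a real gap in the stochastic estimate.

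\textbf{Different notion of stability.} The paper does \emph{not} bound the solution itself. It takes two discrete solutions $\bm{\mathcal U}^n$ and $\widetilde{\bm{\mathcal U}}^n$ driven by the \emph{same} Wiener process, sets $\mathbfit{\Psi}^n=\mathbb E[\bm{\mathcal U}^n-\widetilde{\bm{\mathcal U}}^n]$, and shows $\|\mathbfit{\Psi}^n\|\le C\|\mathbfit{\Psi}^0\|$ (continuous dependence on initial data). Because both solutions see the identical increments $W^n-W^{n-1}$, the stochastic terms \emph{cancel} upon subtraction, and the remainder of the proof is a purely deterministic energy argument: test the difference equation with $\bm{\chi}=\mathbfit{\Psi}^n$, apply Cauchy--Schwarz, use the Lipschitz bound $\|\mathbfit F-\widetilde{\mathbfit F}\|\le L\|\mathbfit{\Psi}^{n-1}\|$, introduce the weighted norm $\|\cdot\|_{H_w}^2=\|\mathbfit H_1\|\,\|\cdot\|^2+\tfrac{\tau}{2}\|\zeta\|\,\|\nabla\cdot\|^2$, and close with discrete Gronwall. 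Your approach instead aims at an a~priori bound $\mathbb E\|\bm{\mathcal U}^n\|^2\le C(\|\bm{\mathcal U}^0\|^2+1)$; that is a legitimate (and in some ways stronger) stability notion, but it is not what the paper means by Theorem~\ref{Th1}.

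\textbf{The gap.} Your treatment of the noise does not close. You claim that $\mathbb E\bigl(\tau^{-1}(W^n-W^{n-1}),u^{n-1/2}\bigr)\le \tfrac{C_Q}{2}+\tfrac12\mathbb E\|u^{n-1/2}\|^2$, but the $Q$-Wiener isometry gives $\mathbb E\|\tau^{-1}(W^n-W^{n-1})\|^2=C_Q\tau^{-1}$, not $C_Q$. With the correct power, Young's inequality produces either a term $C_Q\tau^{-1}$ (which after multiplying by $2\tau$ and summing over $n\le N=T/\tau$ contributes $O(\tau^{-1})$) or a term $\mathbb E\|u^{n-1/2}\|^2$ without a factor of $\tau$ (which is not summable). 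Either way the Gronwall step fails. A correct a~priori bound for the additive-noise scheme cannot come from Cauchy--Schwarz alone; you must exploit adaptedness, namely $\mathbb E[(W^n-W^{n-1},u^{n-1})]=0$, and then control the residual correlation between $W^n-W^{n-1}$ and $u^n$ through the structure of the implicit step. The paper avoids this entire issue by working with the perturbation $\mathbfit{\Psi}^n$, which is precisely the simplification your plan misses.
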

\begin{proof}
Let $\zeta(x)$ and $\xi(x)\in L^2(\Omega)$.
We want to  find $\bm{{\cal U}}^n\in { \mathbfit H}_0^1(\Omega)$ such that 
{\allowdisplaybreaks
\begin{eqnarray}\label{7s}
{{ \mathbfit{H}}_1}\left( {{{ \bm{{\cal U}}}^n},{ \bm{\chi}}} \right)  &+& \frac{{\tau}}{2}{ \mathbfit{I}}\left( {\zeta (x)\nabla {{ \bm{{\cal U}}}^n},\nabla { \bm{\chi}}} \right)  - \frac{{\tau}}{2}{ \mathbfit{I}}\left( {{{ \bm{{\cal U}}}^n},\frac{\partial }{{\partial x}}{ \bm{\chi}}} \right)  - \frac{{\tau}}{2}{ \mathbfit{I}}\left( {{{ \bm{{\cal U}}}^n},\frac{\partial }{{\partial y}}{ \bm{\chi}}} \right) +  \left( W^n ,\textbf{V}
\right)
 \nonumber\\\nonumber
\\
 &=& {{ \mathbfit{H}}_2}\left( {{{ \bm{{\cal U}}}^{n - 1}},{ \bm{\chi}}} \right)  - \frac{{\tau}}{2}{ \mathbfit{I}}\left( {\zeta (x)\nabla {{ \bm{{\cal U}}}^{n - 1}},\nabla { \bm{\chi}}} \right)  + \frac{{\tau}}{2}{ \mathbfit{I}}\left( {{{ \bm{{\cal U}}}^{n - 1}},\frac{\partial }{{\partial x}}{ \bm{\chi}}} \right) \nonumber\\\nonumber
\\
 &+& \frac{{\tau}}{2}{ \mathbfit{I}}\left( {{{ \bm{{\cal U}}}^{n - 1}},\frac{\partial }{{\partial y}}{ \bm{\chi}}} \right)  - \tau{ \mathbfit{N}}\left( {{ \mathbfit{F}},{ \bm{\chi}}} \right) +\left( W^{n-1} ,\textbf{V}
 \right)  \qquad \forall~{ \bm{\chi}} \in { \mathbfit{H}}_0^1(\Omega ).
\end{eqnarray}
\allowdisplaybreaks}
Let ${{  \bm{\widetilde {{\cal U}}}}^n}$ be an approximate solution of ${{  \mathbfit{  {U}}}^n}$, then
{\allowdisplaybreaks
\begin{eqnarray}\label{7s-11}
{{ \mathbfit{H}}_1}\left( {{{  \bm{\widetilde {{\cal U}}}}^n},{ \bm{\chi}}} \right)  &+& \frac{{\tau}}{2}{ \mathbfit{I}}\left( {\zeta (x)\nabla {{  \bm{\widetilde {{\cal U}}}}^n},\nabla { \bm{\chi}}} \right)  - \frac{{\tau}}{2}{ \mathbfit{I}}\left( {{{  \bm{\widetilde {{\cal U}}}}^n},\frac{\partial }{{\partial x}}{ \bm{\chi}}} \right)  - \frac{{\tau}}{2}{ \mathbfit{I}}\left( {{{ \bm{{\cal U}}}^n},\frac{\partial }{{\partial y}}{ \bm{\chi}}} \right) +\left( W^{n} ,\textbf{V}
\right) \nonumber\\\nonumber
\\
 &=& {{ \mathbfit{H}}_2}\left( {{{  \bm{\widetilde {{\cal U}}}}^{n - 1}},{ \bm{\chi}}} \right)  - \frac{{\tau}}{2}{ \mathbfit{I}}\left( {\zeta (x)\nabla {{  \bm{\widetilde {{\cal U}}}}^{n - 1}},\nabla { \bm{\chi}}} \right)  + \frac{{\tau}}{2}{ \mathbfit{I}}\left( {{{  \bm{\widetilde {{\cal U}}}}^{n - 1}},\frac{\partial }{{\partial x}}{ \bm{\chi}}} \right) \nonumber\\\nonumber
\\
 &+& \frac{{\tau}}{2}{ \mathbfit{I}}\left( {{{  \bm{\widetilde {{\cal U}}}}^{n - 1}},\frac{\partial }{{\partial y}}{ \bm{\chi}}} \right)  - \tau{ \mathbfit{N}}\left( {{ \mathbfit{\widetilde {F}}},{ \bm{\chi}}} \right) +\left( W^{n-1} ,\textbf{V}
 \right) \qquad\forall ~{ \bm{\chi}} \in { \mathbfit{H}}_0^1(\Omega ),
\end{eqnarray}
\allowdisplaybreaks}
where $\widetilde {\mathbfit{F}} = {\mathbfit{F}}( {\widetilde {\bm{{\cal U}}}} )$.  Subtracting Eq. \eqref{7s-11} for Eq. \eqref{7s} , results
{\allowdisplaybreaks
\begin{eqnarray}\label{7s-12}
{{ \mathbfit{H}}_1}\left( {{{ \mathbfit{\Psi}}^n},{ \bm{\chi}}} \right)  &+& \frac{{\tau}}{2}{ \mathbfit{I}}\left( {\zeta (x)\nabla {{ \mathbfit{\Psi}}^n},\nabla { \bm{\chi}}} \right)  - \frac{{\tau}}{2}{ \mathbfit{I}}\left( {{{ \mathbfit{\Psi}}^n},\frac{\partial }{{\partial x}}{ \bm{\chi}}} \right)  - \frac{{\tau}}{2}{ \mathbfit{I}}\left( {{{\mathbfit{\Psi}}^n},\frac{\partial }{{\partial y}}{ \bm{\chi}}} \right) \nonumber\\\nonumber
\\
 &=& {{ \mathbfit{H}}_2}\left( {{{ \mathbfit{\Psi}}^{n - 1}},{ \bm{\chi}}} \right)  - \frac{{\tau}}{2}{ \mathbfit{I}}\left( {\zeta (x)\nabla {{ \mathbfit{\Psi}}^{n - 1}},\nabla { \bm{\chi}}} \right)  + \frac{{\tau}}{2}{ \mathbfit{I}}\left( {{{ \mathbfit{\Psi}}^{n - 1}},\frac{\partial }{{\partial x}}{ \bm{\chi}}} \right) \nonumber\\\nonumber
\\
 &+& \frac{{\tau}}{2}{ \mathbfit{I}}\left( {{{ \mathbfit{\Psi}}^{n - 1}},\frac{\partial }{{\partial y}}{ \bm{\chi}}} \right)  - \tau{ \mathbfit{N}}\left( {{ \mathbfit{F-\widetilde {F}}},{ \bm{\chi}}} \right) ,{\mkern 1mu} {\mkern 1mu} \qquad\forall~{ \bm{\chi}} \in { \mathbfit{H}}_0^1(\Omega ),
\end{eqnarray}
\allowdisplaybreaks}
where
\[\mathbfit{\Psi}^n = \mathbb{E} [{{\bm{{\cal U}}}^n} - {\widetilde {\bm{{\cal U}}}^n}].\]
Setting ${ \bm{\chi}}=\mathbfit{\Psi} ^n$ in Eq. \eqref{7s-12} yields
{\allowdisplaybreaks
\begin{eqnarray}\label{8s}
{{ \mathbfit{H}}_1}\left( {{{ \mathbfit{\Psi}}^n},{ \mathbfit{\Psi}^n}} \right)  &+& \frac{{\tau}}{2}{ \mathbfit{I}}\left( {\zeta (x)\nabla {{ \mathbfit{\Psi}}^n},\nabla { \mathbfit{\Psi}^n}} \right)  - \frac{{\tau}}{2}{ \mathbfit{I}}\left( {{{ \mathbfit{\Psi}}^n},\frac{\partial }{{\partial x}}{ \mathbfit{\Psi}^n}} \right)  - \frac{{\tau}}{2}{ \mathbfit{I}}\left( {{{\mathbfit{\Psi}}^n},\frac{\partial }{{\partial y}}{ \mathbfit{\Psi}^n}} \right) \nonumber\\\nonumber
\\
 &=& {{ \mathbfit{H}}_2}\left( {{{ \mathbfit{\Psi}}^{n - 1}},{ \mathbfit{\Psi}^n}} \right)  - \frac{{\tau}}{2}{ \mathbfit{I}}\left( {\zeta (x)\nabla {{ \mathbfit{\Psi}}^{n - 1}},\nabla { \mathbfit{\Psi}^n}} \right)  + \frac{{\tau}}{2}{ \mathbfit{I}}\left( {{{ \mathbfit{\Psi}}^{n - 1}},\frac{\partial }{{\partial x}}{ \mathbfit{\Psi}^n}} \right) \nonumber\\\nonumber
\\
 &+& \frac{{\tau}}{2}{ \mathbfit{I}}\left( {{{ \mathbfit{\Psi}}^{n - 1}},\frac{\partial }{{\partial y}}{ \bm{\chi}}} \right)  - \tau{ \mathbfit{N}}\left( {{ \mathbfit{F-\widetilde {F}}},{ \mathbfit{\Psi}^n}} \right).
\end{eqnarray}
\allowdisplaybreaks}
Applying the  Cauchy-Schwarz inequality for Eq. \eqref{8s}, results
{\allowdisplaybreaks
\begin{eqnarray*}
\left\| {{{ \mathbfit{H}}_1}} \right\|\left\| {{{ \mathbfit{\Psi}}^n}} \right\|_{{{ \bm{L}}^2}(\Omega )}^2 &+& \frac{{\tau}}{2}\left\| {\zeta (x)} \right\|\left\| {\nabla {{ \mathbfit{\Psi}}^n}} \right\|_{{{ \bm{L}}^2}(\Omega )}^2 \le \frac{{\tau}}{2}\left( {{{ \mathbfit{\Psi}}^n},\frac{\partial }{{\partial x}}{{ \mathbfit{\Psi}}^n}} \right)  + \frac{{\tau}}{2}\left( {{{ \mathbfit{\Psi}}^n},\frac{\partial }{{\partial y}}{{ \mathbfit{\Psi}}^n}} \right) \\
\\
 &+& \left\| {{{ \mathbfit{H}}_2}} \right\|{\left\| {{{ \mathbfit{\Psi}}^n}} \right\|_{{{ \bm{L}}^2}(\Omega )}}{\left\| {{{ \mathbfit{\Psi}}^{n - 1}}} \right\|_{{{ \bm{L}}^2}(\Omega )}} + \frac{{\tau}}{2}\left\| {\zeta (x)} \right\|{\left\| {\nabla {{ \mathbfit{\Psi}}^n}} \right\|_{{{ \bm{L}}^2}(\Omega )}}{\left\| {\nabla {{ \mathbfit{\Psi}}^{n - 1}}} \right\|_{{{ \bm{L}}^2}(\Omega )}}  \\
\\
 &+& \frac{{\tau}}{2}\left( {{{ \mathbfit{\Psi}}^{n - 1}},\frac{\partial }{{\partial y}}{{ \mathbfit{\Psi}}^n}} \right) + \frac{{\tau}}{2}\left( {{{ \mathbfit{\Psi}}^{n - 1}},\frac{\partial }{{\partial x}}{{ \mathbfit{\Psi}}^n}} \right) - \tau{ \mathbfit{N}}\left( {{ \mathbfit{F-\widetilde {F}}},{ \mathbfit{\Psi}^n}} \right).
\end{eqnarray*}
\allowdisplaybreaks}
There exists constant $C$ such that
\begin{equation}
  \left\| {{{ \mathbfit{H}}_2}} \right\|,\left\| {{{ \mathbfit{H}}_3}} \right\| \le  C,
\end{equation}
and
\begin{equation}
\left\| {{ \mathbfit{F}} - \widetilde { \mathbfit{F}}} \right\| \le  \bm{L}{{\mathbfit{\Psi}}^{n-1}}.
\end{equation}
By simplification we have
{\allowdisplaybreaks
\begin{eqnarray*}
\left\| {{{ \mathbfit{H}}_1}} \right\|\left\| {{{ \mathbfit{\Psi}}^n}} \right\|_{{{ \bm{L}}^2}(\Omega )}^2 &+& \frac{{\tau}}{2}\left\| {\zeta (x)} \right\|\left\| {\nabla {{ \mathbfit{\Psi}}^n}} \right\|_{{{ \bm{L}}^2}(\Omega )}^2 \le \frac{{\tau}}{2}{\left\| {{{ \mathbfit{\Psi}}^n}} \right\|_{{{ \bm{L}}^2}(\Omega )}}{\left\| {\nabla {{ \mathbfit{\Psi}}^{n - 1}}} \right\|_{{{ \bm{L}}^2}(\Omega )}} + \frac{{\tau}}{2}{\left\| {{{ \mathbfit{\Psi}}^{n - 1}}} \right\|_{{{ \bm{L}}^2}(\Omega )}}{\left\| {\nabla {{ \mathbfit{\Psi}}^n}} \right\|_{{{ \bm{L}}^2}(\Omega )}}\\
\\
 &+& \left\| {{{ \mathbfit{H}}_2}} \right\|{\left\| {{{ \mathbfit{\Psi}}^n}} \right\|_{{{ \bm{L}}^2}(\Omega )}}{\left\| {{{ \mathbfit{\Psi}}^{n - 1}}} \right\|_{{{ \bm{L}}^2}(\Omega )}} + \frac{{\tau}}{2}\left\| {\zeta (x)} \right\|{\left\| {\nabla {{ \mathbfit{\Psi}}^n}} \right\|_{{{ \bm{L}}^2}(\Omega )}}{\left\| {\nabla {{ \mathbfit{\Psi}}^{n - 1}}} \right\|_{{{ \bm{L}}^2}(\Omega )}}\\
\\
 &+& \tau \bm{L}\left\| { \mathbfit{N}} \right\|{\left\| {{{ \mathbfit{\Psi}}^{n - 1}}} \right\|_{{{ \bm{L}}^2}(\Omega )}}{\left\| {{{ \mathbfit{\Psi}}^{n}}} \right\|_{{{ \bm{L}}^2}(\Omega )}}.
\end{eqnarray*}
\allowdisplaybreaks}
So, from the following assumption and the definition of matrices ${{{ \mathbfit{H}}_1}}$ and ${{{ \mathbfit{H}}_2}}$, we have
\[\left\| {{{ \mathbfit{H}}_2}} \right\| \le \left\| {{{ \mathbfit{H}}_1}} \right\|.\]
Now, we can get
{\allowdisplaybreaks
\begin{eqnarray}\label{9s}
\frac{1}{2}\left\| {{{ \mathbfit{H}}_1}} \right\|\left\| {{{ \mathbfit{\Psi}}^n}} \right\|_{{{ \bm{L}}^2}(\Omega )}^2 &+& \frac{{\tau}}{4}\left\| {\zeta ({ \mathbfit{x}})} \right\|\left\| {\nabla {{ \mathbfit{\Psi}}^n}} \right\|_{{{ \bm{L}}^2}(\Omega )}^2 \\ \nonumber
\\
&\le& \frac{1}{2}\left\| {{{ \mathbfit{H}}_1}} \right\|\left\| {{{ \mathbfit{\Psi}}^{n - 1}}} \right\|_{{{ \bm{L}}^2}(\Omega )}^2 + \frac{{\tau}}{2}\left\| {\zeta ({ \mathbfit{x}})} \right\|\left\| {\nabla {{ \mathbfit{\Psi}}^{n - 1}}} \right\|_{{{ \bm{L}}^2}(\Omega )}^2\nonumber\\\nonumber
\\\nonumber
&+&\frac{{C_1 \bm{L}\tau}}{{2\left\| {\zeta ({ \mathbfit{x}})} \right\|}}\left\| {{{ \mathbfit{\Psi}}^n}} \right\|_{{{ \bm{L}}^2}(\Omega )}^2 + \frac{{C_2 \bm{L}\tau}}{{2\left\| {\zeta ({ \mathbfit{x}})} \right\|}}\left\| {{{ \mathbfit{\Psi}}^{n - 1}}} \right\|_{{{ \bm{L}}^2}(\Omega )}^2.
\end{eqnarray}
\allowdisplaybreaks}
Using the below relation
\[\left\| {{{ \mathbfit{\Psi}}^n}} \right\|_{{{ \mathbfit{H}}_w}(\Omega )}^2 = \left\| {{{ \mathbfit{H}}_1}} \right\|\left\| {{{ \mathbfit{\Psi}}^n}} \right\|_{{{ \bm{L}}^2}(\Omega )}^2 + \frac{1}{2}\tau\left\| {\zeta ({ \mathbfit{x}})} \right\|\left\| {\nabla {{ \mathbfit{\Psi}}^n}} \right\|_{{{ \bm{L}}^2}(\Omega )}^2,\]
Eq. \eqref{9s} is changed to
\begin{equation}\label{11s}
\left\| {{{ \mathbfit{\Psi}}^n}} \right\|_{{{ \mathbfit{H}}_w}(\Omega )}^2 \le \left\| {{{ \mathbfit{\Psi}}^{n - 1}}} \right\|_{{{ \mathbfit{H}}_w}(\Omega )}^2 + \frac{{C_1 \bm{L}\tau}}{{\left\| {\zeta ({ \mathbfit{x}})} \right\|}}\left\| {{{ \mathbfit{\Psi}}^n}} \right\|_{{{ \mathbfit{H}}_w}(\Omega )}^2 + \frac{{C_2 \bm{L}\tau}}{{\left\| {\zeta ({ \mathbfit{x}})} \right\|}}\left\| {{{ \mathbfit{\Psi}}^{n - 1}}} \right\|_{{{ \mathbfit{H}}_w}(\Omega )}^2. 
\end{equation}
By summing Eq. \eqref{11s} for $j$ from 0 to $n$, gives
\[\begin{array}{l}
\displaystyle\sum\limits_{m = 1}^n {\left\| {{{ \mathbfit{\Psi}}^m}} \right\|_{{{ \mathbfit{H}}_w}(\Omega )}^2}  \le \sum\limits_{m = 1}^n {\left\| {{{ \mathbfit{\Psi}}^{m - 1}}} \right\|_{{{ \mathbfit{H}}_w}(\Omega )}^2}  + \frac{{C_1 \bm{L}\tau}}{{\left\| {\zeta ({ \mathbfit{x}})} \right\|}}\sum\limits_{m = 1}^n {\left\| {{{ \mathbfit{\Psi}}^m}} \right\|_{{{ \mathbfit{H}}_w}(\Omega )}^2}  +\displaystyle \frac{{C_2 \bm{L}\tau}}{{\left\| {\zeta ({ \mathbfit{x}})} \right\|}}\sum\limits_{m = 1}^n {\left\| {{{ \mathbfit{\Psi}}^{m - 1}}} \right\|_{{{ \mathbfit{H}}_w}(\Omega )}^2}.
\end{array}\]
Thus, we have
\begin{equation}\label{12s}
\left\| {{{ \mathbfit{\Psi}}^n}} \right\|_{{{ \mathbfit{H}}_w}(\Omega )}^2 \le \left\| {{{ \mathbfit{\Psi}}^0}} \right\|_{{{ \mathbfit{H}}_w}(\Omega )}^2 + \frac{{2C \bm{L}\tau}}{{\left\| {\zeta ({ \mathbfit{x}})} \right\|}}\sum\limits_{m = 1}^n {\left\| {{{ \mathbfit{\Psi}}^m}} \right\|_{{{ \mathbfit{H}}_w}(\Omega )}^2}  
\end{equation}
Considering  Gronwall's inequality for Eq. (\ref{12s}) yields
{\allowdisplaybreaks
\begin{eqnarray*}
\left\| {{{ \mathbfit{\Psi}}^n}} \right\|_{{{ \mathbfit{H}}_w}(\Omega )}^2 &\le& \left\| {{{ \mathbfit{\Psi}}^0}} \right\|_{{{ \mathbfit{H}}_w}(\Omega )}^2 + \frac{{2C \bm{L}\tau}}{{\left\| {\zeta ({ \mathbfit{x}})} \right\|}}\sum\limits_{m = 1}^n {\left\| {{{ \mathbfit{\Psi}}^m}} \right\|_{{{ \mathbfit{H}}_w}(\Omega )}^2} \\
\\
 &\le& \left\{ {\left\| {{{ \mathbfit{\Psi}}^0}} \right\|_{{{ \mathbfit{H}}_w}(\Omega )}^2 } \right\}\exp \left( {\frac{{2C \bm{L}n\tau}}{{\left\| {\zeta ({ \mathbfit{x}})} \right\|}}} \right)\le \mathbf{C}\left\| {{{ \mathbfit{\Psi}}^0}} \right\|_{{{ \mathbfit{H}}_w}(\Omega )}^2.
\end{eqnarray*}
\allowdisplaybreaks}
So, we have
\[\left\| {{{ \mathbfit{\Psi}}^n}} \right\|_{{{ \bm{L}}^2}(\Omega )}^{} \le \left\| {{{ \mathbfit{\Psi}}^n}} \right\|_{{{ \mathbfit{H}}_w}(\Omega )}^{} \le C\left\| {{{ \mathbfit{\Psi}}^0}} \right\|_{{{ \mathbfit{H}}_w}(\Omega )}^{}.\]
\end{proof} 
\begin{Theorem} The convergence order of relation (\ref{6sss}) is 
 ${\cal O}\left( {{\tau ^2}}\right)$.
\end{Theorem}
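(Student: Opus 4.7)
The plan is to combine a consistency (truncation error) analysis with the unconditional stability estimate already established in Theorem \ref{Th1}. Let $\bm{{\cal U}}(t_n)$ denote the exact mild solution evaluated at the grid time $t_n$ and let $\bm{{\cal U}}^n$ denote the numerical solution produced by \eqref{6sss}. Define the error $\mathbfit{e}^n = \mathbb{E}[\bm{{\cal U}}(t_n) - \bm{{\cal U}}^n]$. The first step is to substitute the exact solution into scheme \eqref{6sss} and record the residual $\mathbfit{R}^n$ produced by the substitution; the goal of the consistency step is to verify that this residual satisfies $\|\mathbfit{R}^n\|_{L^2(\Omega)} \le C\tau^2$.

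To obtain this truncation estimate, I would expand every term of the scheme in a Taylor series centred at the midpoint $t_{n-1/2}$. The Crank--Nicolson averaging of the linear advection, diffusion, reaction and damping operators produces only even-order remainders around the midpoint, so each of those contributions is of order $\tau^2$ by a standard symmetry argument. The nonlinear term $f(u^{n-1},v^{n-1})$ is evaluated at the previous step rather than at the midpoint, so I would first Taylor-expand $f(u^{n-1},v^{n-1})$ around $(u^{n-1/2},v^{n-1/2})$ and then use the linear growth and bounded-derivative assumptions stated in the introduction to control the expansion; the leading $\mathcal{O}(\tau)$ piece is absorbed into the discretization of $\partial_t u$ at the midpoint, and the remainder is $\mathcal{O}(\tau^2)$. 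The stochastic increment $W^n-W^{n-1}$ corresponds to the integrated Wiener process over $[t_{n-1},t_n]$; taking expectations and using the regularity assumption $\mathbb{E}\|\bm{{\cal U}}(t)\|^2 < \infty$ together with the $Q$-Wiener property removes the fluctuating part from $\mathbb{E}[\mathbfit{R}^n]$, leaving an $\mathcal{O}(\tau^2)$ bound.

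With the residual bound in hand, subtracting \eqref{6sss} from the perturbed identity gives an error equation of exactly the same algebraic form as \eqref{7s-12}, but now with an extra forcing term $\mathbfit{R}^n$ on the right-hand side. I would then repeat verbatim the manipulations used in the proof of Theorem \ref{Th1}: test with $\bm\chi = \mathbfit{e}^n$, apply the Cauchy--Schwarz and Young inequalities, use the Lipschitz estimate $\|\mathbfit{F}-\widetilde{\mathbfit{F}}\| \le \bm{L}\mathbfit{e}^{n-1}$, and pass to the $\|\cdot\|_{\mathbfit{H}_w(\Omega)}$ norm defined there. Setting the new term $(\mathbfit{R}^n,\mathbfit{e}^n)$ aside via Young's inequality yields an inequality of the shape
\begin{equation*}
\|\mathbfit{e}^n\|^2_{\mathbfit{H}_w(\Omega)} \le \|\mathbfit{e}^{n-1}\|^2_{\mathbfit{H}_w(\Omega)} + C\tau\bigl(\|\mathbfit{e}^n\|^2_{\mathbfit{H}_w(\Omega)}+\|\mathbfit{e}^{n-1}\|^2_{\mathbfit{H}_w(\Omega)}\bigr) + C\tau\|\mathbfit{R}^n\|^2_{L^2(\Omega)}.
\end{equation*}

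Summing from $m=1$ to $n$, using $\mathbfit{e}^0 = 0$ and inserting the consistency bound $\|\mathbfit{R}^m\| \le C\tau^2$, the forcing contributes $\sum_{m=1}^n C\tau \cdot (\tau^2)^2 \le C T\tau^4$, so an application of the discrete Gronwall inequality (exactly as in the final step of Theorem \ref{Th1}) produces $\|\mathbfit{e}^n\|_{L^2(\Omega)} \le C\tau^2$, which is the claimed second-order convergence. The main obstacle is the consistency step, specifically bookkeeping the interaction between the midpoint Taylor expansion of the linear terms and the off-centre evaluation of the nonlinear term $f(u^{n-1},v^{n-1})$ while simultaneously showing that the stochastic increment contributes only $\mathcal{O}(\tau^2)$ after taking expectations; once this is done, the stability machinery from Theorem \ref{Th1} carries the argument through mechanically.
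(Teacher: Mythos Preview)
Your proposal is correct and follows essentially the same route as the paper: derive the error equation by inserting the exact solution into \eqref{6sss}, invoke the Crank--Nicolson truncation bound $\|\mathbfit{R}^n\|\le C\tau^2$, and then rerun the energy/Gronwall argument of Theorem~\ref{Th1} with the additional forcing term. The paper is in fact considerably terser than you are---it simply asserts the $\mathcal{O}(\tau^2)$ truncation ``according to the Crank--Nicolson idea'' and writes down the final Gronwall chain without the intermediate steps---so your more detailed consistency discussion (Taylor expansion about $t_{n-1/2}$, treatment of the off-centred nonlinearity, handling of the stochastic increment in expectation) goes beyond what the paper provides, but the skeleton of the argument is identical.
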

\begin{proof}
Let us assume $\textbf{u}^n,\,{{\bm{{\cal U}}}^n}\in { \mathbfit H}_0^1(\Omega)$. We set
\[{ \mathbfit{\rm X} ^n} = \mathbb{E} [ \textbf{u}^n - {{ \bm{{\cal U}}}^n}] \qquad n \ge 1,\] 
where ${ \mathbfit{\rm X} ^0}={ \mathbfit 0}$.
Then, we have
\begin{equation}\label{6s}
\begin{array}{l}
 \mathbfit{H}_1{ \mathbfit{\rm X}}^n + \displaystyle\frac{{\tau}}{2}{\mathbfit{I}}\nabla {{ \mathbfit{\rm X}}^n} - \frac{{\tau}}{2}{\mathbfit{I}}\nabla  \cdot \zeta ({\mathbfit{x}})\nabla {{ \mathbfit{\rm X}}^n}= \\ \\ {\mathbfit{H}}_2{{ \mathbfit{\rm X}}^{n - 1}} - \displaystyle\frac{{\tau}}{2}{\mathbfit{I}}\nabla {{ \mathbfit{\rm X}}^{n - 1}} + \frac{{\tau}}{2}{\mathbfit{I}}\nabla  \cdot \zeta ({\mathbfit{x}})\nabla {{ \mathbfit{\rm X}}^{n - 1}} +\tau \mathbfit{ R} - \tau{\mathbfit{N}}\left({{ \mathbfit{F}}^{n-1} - \widetilde { \mathbfit{F}}^{n-1}} \right).
\end{array}
\end{equation}
According to the Crank-Nicolson idea, we have
\[\left| {\bf{R}} \right| \le {C_1}{\tau ^2}.\]
Similar to Theorem \ref{Th1}, we obtain
{\allowdisplaybreaks
\begin{eqnarray*}
\left\| {{{\mathbfit{\rm X}}^n}} \right\|_{{{ \mathbfit{H}}_w}(\Omega )}^2 &\le& \left\| {{{\mathbfit{\rm X}}^0}} \right\|_{{{ \mathbfit{H}}_w}(\Omega )}^2 + \frac{{2\bm{L}\tau}}{{\left\| {\zeta ({ \mathbfit{x}})} \right\|}}\sum\limits_{m = 1}^n {\left\| {{{\mathbfit{\rm X}}^m}} \right\|_{{{ \mathbfit{H}}_w}(\Omega )}^2}  + \mathop {\max }\limits_{1 \le m \le n} \left\| {{{ \mathbfit{R}}{}}} \right\|_{{{ \bm{L}}^2}(\Omega )}^2\\
\\
 &\le& \left\{ {  \mathop {\max }\limits_{1 \le m \le n} \left\| {{{ \mathbfit{R}}}} \right\|_{{{ \bm{L}}^2}(\Omega )}^2} \right\}\exp \left( {\frac{{2\bm{L}n\tau}}{{\left\| {\zeta ({ \mathbfit{x}})} \right\|}}} \right)\le \mathbf{C}\tau^2\\ 
 \\
 &\le& \left\{ {\mathop {\max }\limits_{1 \le m \le n} \left\| \bm{R} \right\|_{{\bm{L}^2}(\Omega )}^2} \right\}\exp \left( {\frac{{2\bm{L}n\tau }}{{\left\| {\zeta (\bm{x})} \right\|}}} \right)\\
 \\
 &\le& \exp \left( {\frac{{2\bm{L}n\tau }}{{\left\| {\zeta (\bm{x})} \right\|}}} \right){C_1}{\tau ^2} \le {\bf{C}}{\tau ^2}.
\end{eqnarray*}
\allowdisplaybreaks}
which completes the proof.
\end{proof}
\section{Error estimation for full-discrete plane}
\label{section3}
In this section, we employ a new class of Legendre polynomial functions  which were developed in \cite{Shen}.
\begin{lemma}\cite{Shen}
	Consider the following relations
	\begin{equation}\label{basis}
	{\psi _k}(x) = {\gamma _k}({L_k}(x) - {L_{k + 2}}(x)),
	\end{equation}
	in which ${\gamma _k} = {\left( {4k + 6} \right)^{ - \frac{1}{2}}}$ and $L_k(x)$ are the Legendre polynomials. Let us denote
	\begin{equation}
	{a_{jk}} = \int\limits_{ - 1}^1 {\frac{{d{\psi _k}(x)}}{{dx}}\frac{{d{\psi _j}(x)}}{{dx}}} dx,\,\,\,\,\,\,\,\,\,\,\,\,\,\,\,\,\,\,\,\,\,\,\,\,\,\,\,{b_{jk}} = \int\limits_{ - 1}^1 {{\psi _k}(x){\psi _j}(x)} dx.
	\end{equation}
	Then
	\begin{equation}
	{a_{jk}} = \left\{ \begin{array}{l}
	1,\,\,\,\,\,\,\,\,\,\,\,\,\,\,\,\,\,k = j,\\ \\
	0,\,\,\,\,\,\,\,\,\,\,\,\,\,\,\,\,\,k \ne j,
	\end{array} \right.\,\,\,\,\,\,\,\,\,\,\,\,\,\,\,\,\,\,\,\,\,\,\,\,{b_{jk}} = {b_{kj}} = \left\{ \begin{array}{l}
	{\gamma _k}{\gamma _j}\left( {\displaystyle\frac{2}{{2j + 1}} + \frac{2}{{2j + 5}}} \right),\,\,\,\,\,\,\,\,\,\,\,\,\,\,\,\,\,\,\,\,k = j,\\
	\\
	- {\gamma _k}{\gamma _j}\displaystyle\frac{2}{{2k + 1}},\,\,\,\,\,\,\,\,\,\,\,\,\,\,\,\,\,\,\,\,k = j + 2,\\
	\\
	0,\,\,\,\,\,\,\,\,\,\,\,\,\,\,\,\,\,\,\,\,{\rm Otherwise}.
	\end{array} \right.
	\end{equation}
\end{lemma}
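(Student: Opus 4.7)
The plan is to reduce both integrals to standard Legendre $L^2$ orthogonality $\int_{-1}^{1} L_m(x) L_n(x)\,dx = \tfrac{2}{2n+1}\delta_{mn}$. The single identity that does all the work is the classical derivative relation $(2n+1)L_n(x) = L'_{n+1}(x) - L'_{n-1}(x)$. Setting $n = k+1$ gives $L'_{k+2}(x) - L'_{k}(x) = (2k+3)L_{k+1}(x)$, so differentiating $\psi_k(x) = \gamma_k(L_k(x)-L_{k+2}(x))$ collapses to a single Legendre mode:
\begin{equation*}
\psi'_k(x) \;=\; \gamma_k\bigl(L'_k(x) - L'_{k+2}(x)\bigr) \;=\; -\gamma_k(2k+3)\,L_{k+1}(x) \;=\; -\sqrt{\tfrac{2k+3}{2}}\,L_{k+1}(x),
\end{equation*}
using $\gamma_k = (4k+6)^{-1/2} = \bigl(2(2k+3)\bigr)^{-1/2}$.

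For the stiffness entries $a_{jk}$, I would just multiply the two closed forms for $\psi'_k$ and $\psi'_j$ and apply Legendre orthogonality:
\begin{equation*}
a_{jk} \;=\; \sqrt{\tfrac{2k+3}{2}}\sqrt{\tfrac{2j+3}{2}}\int_{-1}^{1} L_{k+1}(x)L_{j+1}(x)\,dx \;=\; \tfrac{\sqrt{(2k+3)(2j+3)}}{2}\cdot\tfrac{2}{2k+3}\,\delta_{jk} \;=\; \delta_{jk},
\end{equation*}
which is exactly the diagonal statement. (As a consistency note one can also obtain $a_{jk}$ by integration by parts, using $\psi_k(\pm 1) = \gamma_k\bigl((\pm 1)^k-(\pm 1)^{k+2}\bigr) = 0$ to kill the boundary term.)

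For the mass entries $b_{jk}$, I would expand the product directly:
\begin{equation*}
b_{jk} \;=\; \gamma_k\gamma_j \int_{-1}^{1}\!\bigl(L_k - L_{k+2}\bigr)\bigl(L_j - L_{j+2}\bigr)\,dx \;=\; \gamma_k\gamma_j\bigl(I_1 - I_2 - I_3 + I_4\bigr),
\end{equation*}
where $I_1,\dots,I_4$ denote the four cross integrals. By Legendre orthogonality each $I_i$ vanishes unless the two indices coincide, so nonzero contributions occur precisely when $j\in\{k-2,k,k+2\}$. Plugging in $\tfrac{2}{2n+1}$ in each surviving case then yields the three branches in the statement: the diagonal $\gamma_k\gamma_j\bigl(\tfrac{2}{2j+1}+\tfrac{2}{2j+5}\bigr)$, the off-diagonal $-\gamma_k\gamma_j\tfrac{2}{2k+1}$ when $k=j+2$, and zero otherwise. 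Symmetry $b_{jk}=b_{kj}$ is automatic from the definition.

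There is no real obstacle here; the only thing to watch is bookkeeping of indices in the cross integrals (in particular, distinguishing the cases $k=j\pm 2$ and checking that $k=j$ eliminates the two off-diagonal pairings $L_k L_{j+2}$ and $L_{k+2}L_j$). Everything else is a one-line consequence of the Legendre derivative identity and $L^2$-orthogonality.
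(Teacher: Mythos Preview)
Your proof is correct and is in fact the standard argument. The paper does not supply a proof of this lemma at all; it merely quotes the result from \cite{Shen}, so there is nothing to compare against. Your computation of $\psi'_k = -\sqrt{(2k+3)/2}\,L_{k+1}$ via the recurrence $L'_{k+2}-L'_k=(2k+3)L_{k+1}$, followed by Legendre $L^2$-orthogonality for both $a_{jk}$ and $b_{jk}$, is exactly how this result is typically established.
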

\hspace{-0.5cm}The SEM as a combination of the finite element method and spectral polynomials has been developed by Patera \cite{Patera}. By dividing the  computational region into $N_e$
non-overlapping elements $\Omega_e$
\[\Omega  = \bigcup\limits_{e = 1}^{{N_e}} {{\Omega _e}} ,\,\,\,\,\,\,\,\,\,\,\,\,\,\,{\Omega _i} \cap {\Omega _j} = \emptyset ,\,\,\,\,\,\,\,\,\,\,i \ne j.\]
Now, we define the following projection operator.
\begin{equation}\label{version4}
{\cal P}_h^1:H_0^1\left( \Omega  \right) \to {\bf{V}}_N
^0,
\end{equation}
where
\begin{equation}\label{version5}
\left( {\nabla \left( {u -{\cal P}_h^1u} \right),\nabla v} \right) =
0,\,\,\,\,\,\,\,\,\,\,\,u \in H_0^1\left( \Omega
\right),\,\,\,\,\,\,\,\,\,\,\,\,\forall v \in
{\bf{V}}_N ^0,
\end{equation}
and  ${\bf{V}}_N ^0$ is the spectral element
approximation space
\begin{equation}\label{version5-}
{\bf{V}}_N ^0 = \left\{ {w \in H_0^1\left( \Omega
	\right)\,\,:\,\,\,{{\left. w \right|}_{{\Omega _s}}} \in
	{\mathbb{P}_N}\left( \Omega  \right),\,\,\,s = 1,2, \ldots ,{n_s}}
\right\}.
\end{equation}
\begin{lemma}\cite{canuto2006spectral} Let $u\in H^\upsilon$ ($\upsilon \geq 1$), therefore
	\begin{equation}\label{version6-1}
	{\left\| {u - {\cal P}_h^1u} \right\|} \le C{\left[
		{\displaystyle\sum\limits_{k = 1}^{{n_s}} {h_k^{2\left( {\min \left(
						{{N_k} + 1,\upsilon } \right) - 1} \right)}N_k^{2\left( {1 - \upsilon }
					\right)}\left\| u \right\|_\upsilon ^2} } \right]^{\frac{1}{2}}}.
	\end{equation}
	In the special cases $N_k=N$ and  $h\leq h_k\leq c'h$ we get
	\begin{equation}\label{version6}
	\left\| {u - {\cal P} _h^1u} \right\| \le Ch_k^{\left( {\min \left( {N +
				1,\upsilon } \right) - 1} \right)}N_{}^{1 - \upsilon }\left\| u
	\right\|_\upsilon ^{}.
	\end{equation}
\end{lemma}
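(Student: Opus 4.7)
The plan is to reduce the $H^1$ projection error to a best-approximation problem, estimate the best approximation on each spectral element by a standard $hp$-polynomial approximation result, and then reassemble the local estimates into the global bound.

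First, I would exploit the defining property \eqref{version5}: setting $v = \mathcal{P}_h^1 u - w$ for arbitrary $w \in \mathbf{V}_N^0$ yields the Galerkin orthogonality
\[
\bigl(\nabla(u - \mathcal{P}_h^1 u), \nabla(\mathcal{P}_h^1 u - w)\bigr) = 0,
\]
so $\mathcal{P}_h^1 u$ is the $H^1$-seminorm best approximation of $u$ in $\mathbf{V}_N^0$. Combined with the Poincar\'e inequality on $H_0^1(\Omega)$, this gives
\[
\| u - \mathcal{P}_h^1 u \| \le C\, \| \nabla(u - \mathcal{P}_h^1 u) \| \le C \inf_{w \in \mathbf{V}_N^0} \| \nabla(u - w) \|.
\]

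Next, I would construct an explicit $w_\star \in \mathbf{V}_N^0$ element by element. On each $\Omega_k$, affinely mapped to a reference element $\widehat{\Omega}$ of size $O(1)$, I would take $\widehat{w}_\star$ to be the standard $hp$ quasi-interpolant of $\widehat{u}$ expanded in the Legendre basis $\{\psi_k\}$ from \eqref{basis}, chosen so that it preserves the trace at the element vertices and edges and therefore glues into a globally $H_0^1$-conforming function (this is the Bernardi--Maday / Babu\v{s}ka--Suri construction used in \cite{canuto2006spectral}). On the reference element the classical Legendre truncation estimate gives
\[
\| \widehat{u} - \widehat{w}_\star \|_{H^1(\widehat{\Omega})} \le C\, N_k^{1-\upsilon}\, \|\widehat{u}\|_{H^\upsilon(\widehat{\Omega})}, \qquad 1 \le \upsilon \le N_k+1,
\]
and the saturated version $N_k^{-N_k}$ otherwise, which explains the $\min(N_k+1,\upsilon)$ cap. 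Scaling back to $\Omega_k$ by the map of size $h_k$ introduces $h_k^{\min(N_k+1,\upsilon)-1}$ in the $H^1$-seminorm (one power of $h_k$ is absorbed by the derivative).

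Finally, I would square, sum over $k = 1,\dots,n_s$, and take the square root to get \eqref{version6-1}. Under the uniformity assumptions $N_k = N$ and $h \le h_k \le c' h$, pulling the common factor out of the sum and bounding $\sum_k \|u\|_{H^\upsilon(\Omega_k)}^2 \le \|u\|_\upsilon^2$ yields \eqref{version6}. The main technical obstacle in this program is the reference-element estimate combined with the construction of a conforming global interpolant: one must use a Legendre projector that preserves sufficient boundary data so that the elementwise pieces agree across interfaces without spoiling the optimal polynomial rate. This is precisely the point that is treated in detail in \cite{canuto2006spectral}, and invoking that construction closes the argument.
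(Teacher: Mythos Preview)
The paper does not prove this lemma at all: it is stated with the citation \cite{canuto2006spectral} and used as a black box. Your outline is the standard argument for $hp$-type projection estimates (C\'ea/Galerkin orthogonality, elementwise Legendre approximation on a reference element, scaling, and summation) and is exactly the route taken in the cited reference, so there is nothing to compare against in the present paper.
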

\hspace{-0.7cm}We aim to find a $ \bm{{\cal U}}^n\in \omega_r^d$ such that 
{\allowdisplaybreaks
{\small
 \begin{eqnarray}\label{25P}\footnotesize
{{ \mathbfit{H}}_1}\left( {{{ \bm{{\cal U}}}^n},{ \bm{\chi}}} \right)  &+& \frac{{\tau}}{2}{ \mathbfit{I}}\left( {\zeta (x)\nabla {{ \bm{{\cal U}}}^n},\nabla { \bm{\chi}}} \right)  - \frac{{\tau}}{2}{ \mathbfit{I}}\left( {{{ \bm{{\cal U}}}^n},\frac{\partial }{{\partial x}}{ \bm{\chi}}} \right)  - \frac{{\tau}}{2}{ \mathbfit{I}}\left( {{{ \bm{{\cal U}}}^n},\frac{\partial }{{\partial y}}{ \bm{\chi}}} \right)  +  \left( W^n ,\textbf{V}
\right)
\nonumber\\\nonumber
\\
 &=& {{ \mathbfit{H}}_2}\left( {{{ \bm{{\cal U}}}^{n - 1}},{ \bm{\chi}}} \right)  - \frac{{\tau}}{2}{ \mathbfit{I}}\left( {\zeta (x)\nabla {{ \bm{{\cal U}}}^{n - 1}},\nabla { \bm{\chi}}} \right)  + \frac{{\tau}}{2}{ \mathbfit{I}}\left( {{{ \bm{{\cal U}}}^{n - 1}},\frac{\partial }{{\partial x}}{ \bm{\chi}}} \right) \nonumber\\\nonumber
\\
 &+& \frac{{\tau}}{2}{ \mathbfit{I}}\left( {{{ \bm{{\cal U}}}^{n - 1}},\frac{\partial }{{\partial y}}{ \bm{\chi}}} \right)  - \tau{ \mathbfit{N}}\left( {{ \mathbfit{F}^{n-1}},{ \bm{\chi}}} \right)+ \tau\left( {{ \mathbfit{R}_{t}^n},{ \bm{\chi}}} \right)  +  \left( W^{n-1} ,\textbf{V}
 \right)
 \qquad{ \bm{\chi}} \in { \mathbfit{H}}_0^1(\Omega ).
\end{eqnarray}
}
\allowdisplaybreaks}
\hspace{-0.4cm}The spectral element formulation  is:
  find a $ \bm{{\cal U}}_h^n\in \omega_r^d$ such that 
{\allowdisplaybreaks
\begin{eqnarray}\label{26P}
{{ \mathbfit{H}}_1}\left( {{{ \bm{{\cal U}}}_h^n},{ \bm{\chi}_h}} \right)  &+& \frac{{\tau}}{2}{ \mathbfit{I}}\left( {\zeta (x)\nabla {{ \bm{{\cal U}}}_h^n},\nabla { \bm{\chi}_h}} \right)  - \frac{{\tau}}{2}{ \mathbfit{I}}\left( {{{ \bm{{\cal U}}}_h^n},\frac{\partial }{{\partial x}}{ \bm{\chi}_h}} \right)  - \frac{{\tau}}{2}{ \mathbfit{I}}\left( {{{ \bm{{\cal U}}}_h^n},\frac{\partial }{{\partial y}}{ \bm{\chi}_h}} \right)
+\left( W^n ,\textbf{V}
\right) \nonumber\\\nonumber
\\
 &=& {{ \mathbfit{H}}_2}\left( {{{ \bm{{\cal U}}}_h^{n - 1}},{ \bm{\chi}_h}} \right)  - \frac{{\tau}}{2}{ \mathbfit{I}}\left( {\zeta (x)\nabla {{ \bm{{\cal U}}}_h^{n - 1}},\nabla { \bm{\chi}_h}} \right)  + \frac{{\tau}}{2}{ \mathbfit{I}}\left( {{{ \bm{{\cal U}}}_h^{n - 1}},\frac{\partial }{{\partial x}}{ \bm{\chi}_h}} \right) \nonumber\\\nonumber
\\
 &+& \frac{{\tau}}{2}{ \mathbfit{I}}\left( {{{ \bm{{\cal U}}}_h^{n - 1}},\frac{\partial }{{\partial y}}{ \bm{\chi}}} \right)  - \tau{ \mathbfit{N}}\left( {{ \mathbfit{F}^{n-1}},{ \bm{\chi}_h}} \right)+\left( W^{n-1} ,\textbf{V}
 \right) \qquad\forall~{ \bm{\chi}_h} \in { \mathbfit{H}}_0^1(\Omega ).
\end{eqnarray}
\allowdisplaybreaks}

\begin{lemma}
Let
\begin{eqnarray}\label{LL1}
\left( {{\cal G}_{r,d}^n,{{ \bm{\chi}}_r}} \right)  &=& \left( {{\cal P}_h^1\widehat { \bm{{\cal U}}}_r^n - \widehat { \bm{{\cal U}}}_r^n\,,{{ \bm{\chi}}_r}} \right)  + \tau {{ \mathbfit{A}}_2}\left( {{\cal P}_h^1\widehat { \bm{{\cal U}}}_r^n - \widehat { \bm{{\cal U}}}_r^n\,,\frac{\partial }{{\partial x}}{{ \bm{\chi}}_r}} \right)  + \tau {{ \mathbfit{A}}_3}\left( {{\cal P}_h^1\widehat { \bm{{\cal U}}}_r^n - \widehat { \bm{{\cal U}}}_r^n\,,\frac{\partial }{{\partial y}}{{ \bm{\chi}}_r}} \right) \\\nonumber
\\\nonumber
 &-& \left( {{\cal P}_h^1\widehat { \bm{{\cal U}}}_r^{n - 1} - \widehat { \bm{{\cal U}}}_r^{n - 1}\,,{{ \bm{\chi}}_r}} \right)  + \tau {{ \mathbfit{A}}_2}\left( {{\cal P}_h^1\widehat { \bm{{\cal U}}}_r^{n - 1} - \widehat { \bm{{\cal U}}}_r^{n - 1},\frac{\partial }{{\partial x}}{{ \bm{\chi}}_r}} \right)  + \tau {{ \mathbfit{A}}_3}\left( {{\cal P}_h^1\widehat { \bm{{\cal U}}}_r^{n - 1} - \widehat { \bm{{\cal U}}}_r^{n - 1},\frac{\partial }{{\partial y}}{{ \bm{\chi}}_r}} \right) .
\end{eqnarray}
Then, we have
\[\left\| {{\cal G}_{r,d}^n} \right\|_{{L^2}(\Omega )}^{} \le CN_{}^{1 - \upsilon }.\]
\end{lemma}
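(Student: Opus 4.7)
The plan is to test the defining identity \eqref{LL1} with $\bm{\chi}_r={\cal G}_{r,d}^n$ so that the left-hand side collapses to $\|{\cal G}_{r,d}^n\|_{L^2(\Omega)}^2$, and then to estimate each of the six terms on the right by the Cauchy--Schwarz inequality combined with the projection bound \eqref{version6}. Writing $\eta^k := {\cal P}_h^1\widehat{\bm{\cal U}}_r^k - \widehat{\bm{\cal U}}_r^k$ for $k\in\{n-1,n\}$, the preceding lemma already yields $\|\eta^k\|_{L^2(\Omega)}\le C\, N^{1-\upsilon}\|\widehat{\bm{\cal U}}_r^k\|_\upsilon$, and because ${\cal P}_h^1$ is the energy projection defined by \eqref{version5}, the companion $H^1$ bound $\|\nabla \eta^k\|_{L^2(\Omega)}\le C\, N^{1-\upsilon}\|\widehat{\bm{\cal U}}_r^k\|_\upsilon$ follows from Galerkin best-approximation in the energy seminorm.

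First, the two zero-order pairings $(\eta^k,{\cal G}_{r,d}^n)$ are controlled directly by Cauchy--Schwarz against the $L^2$ projection estimate, each contributing at most $C\, N^{1-\upsilon}\|{\cal G}_{r,d}^n\|_{L^2(\Omega)}$. Second, for the four first-order terms of the form $\tau\,{\mathbfit A}_j(\eta^k,\partial_x{\cal G}_{r,d}^n)$ and $\tau\,{\mathbfit A}_j(\eta^k,\partial_y{\cal G}_{r,d}^n)$, I will integrate by parts; the boundary integrals vanish because ${\cal G}_{r,d}^n\in{\mathbfit H}_0^1(\Omega)$. This shifts the derivative from ${\cal G}_{r,d}^n$ onto $\eta^k$, after which Cauchy--Schwarz and the $H^1$ projection bound yield contributions of order $C\tau\, N^{1-\upsilon}\|{\cal G}_{r,d}^n\|_{L^2(\Omega)}$.

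Collecting the six estimates and dividing by $\|{\cal G}_{r,d}^n\|_{L^2(\Omega)}$ (equivalently, applying Young's inequality to absorb a fraction of the squared norm on the left), and folding the bounded quantities $\tau$, $\|{\mathbfit A}_j\|$, and $\|\widehat{\bm{\cal U}}_r^k\|_\upsilon$ into a single generic constant $C$, I arrive at the advertised bound $\|{\cal G}_{r,d}^n\|_{L^2(\Omega)}\le C\, N^{1-\upsilon}$.

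The principal obstacle is the treatment of the first-order terms without incurring a spurious $h^{-1}$ factor. Since the cited lemma states only the $L^2$ projection estimate, one must either (i) invoke the energy orthogonality \eqref{version5} together with the standard best-approximation argument to obtain $\|\nabla\eta^k\|\le C\, N^{1-\upsilon}$, or (ii) apply the spectral-element inverse inequality $\|\partial_x\bm{\chi}_r\|\le C h^{-1}\|\bm{\chi}_r\|$ on the discrete space. Route (i) is cleaner because it preserves the sharp $N^{1-\upsilon}$ rate, whereas route (ii) introduces an $h^{-1}$ factor that would then have to be reabsorbed by the $h$-power implicit in \eqref{version6}.
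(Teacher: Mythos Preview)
Your proposal is correct and follows essentially the same route as the paper: integrate by parts to shift the $\partial_x,\partial_y$ from $\bm{\chi}_r$ onto the projection error $\eta^k={\cal P}_h^1\widehat{\bm{\cal U}}_r^k-\widehat{\bm{\cal U}}_r^k$, test with $\bm{\chi}_r={\cal G}_{r,d}^n$, apply Cauchy--Schwarz term by term, and invoke the projection estimate. Your explicit justification of the $H^1$ bound $\|\nabla\eta^k\|\le C N^{1-\upsilon}$ via the energy orthogonality \eqref{version5} (your route~(i)) is in fact more careful than the paper, which simply writes down the terms $\|\partial_x\eta^k\|_{L^2(\Omega)}$ and $\|\partial_y\eta^k\|_{L^2(\Omega)}$ and declares the conclusion without further comment.
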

\begin{proof}
Eq. \eqref{LL1} is changed to
\[\begin{array}{l}
\left( {{\cal G}_{r,d}^n,{{ \bm{\chi}}_r}} \right)  = \left( {{\cal P}_h^1\widehat { \bm{{\cal U}}}_r^n - \widehat { \bm{{\cal U}}}_r^n{\mkern 1mu} ,{{ \bm{\chi}}_r}} \right)  - \tau {{ \mathbfit{A}}_2}\left( {\displaystyle\frac{\partial }{{\partial x}}\left( {{\cal P}_h^1\widehat { \bm{{\cal U}}}_r^n - \widehat { \bm{{\cal U}}}_r^n} \right){\mkern 1mu} ,{{ \bm{\chi}}_r}} \right)  - \tau {{ \mathbfit{A}}_3}\left( {\displaystyle\frac{\partial }{{\partial y}}\left( {{\cal P}_h^1\widehat { \bm{{\cal U}}}_r^n - \widehat { \bm{{\cal U}}}_r^n} \right){\mkern 1mu} ,{{ \bm{\chi}}_r}} \right) \\
\\
 - \left( {{\cal P}_h^1\widehat { \bm{{\cal U}}}_r^{n - 1} - \widehat { \bm{{\cal U}}}_r^{n - 1}{\mkern 1mu} ,{{ \bm{\chi}}_r}} \right)  - \tau {{ \mathbfit{A}}_2}\left( {\displaystyle\frac{\partial }{{\partial x}}\left( {{\cal P}_h^1\widehat { \bm{{\cal U}}}_r^{n - 1} - \widehat { \bm{{\cal U}}}_r^{n - 1}} \right),{{ \bm{\chi}}_r}} \right)  - \tau {{ \mathbfit{A}}_3}\left( {\displaystyle\frac{\partial }{{\partial y}}\left( {{\cal P}_h^1\widehat { \bm{{\cal U}}}_r^{n - 1} - \widehat { \bm{{\cal U}}}_r^{n - 1}} \right),{{ \bm{\chi}}_r}} \right) .
\end{array}\]
From the above relation, by setting ${{ \bm{\chi}}_r} = Y_{r,d}^n$ we have
\[\begin{array}{l}
\left\| {{\cal G}_{r,d}^n} \right\|_{{L^2}(\Omega )} \le {\left\| {{\cal P}_h^1\widehat { \bm{{\cal U}}}_r^n - \widehat { \bm{{\cal U}}}_r^n} \right\|_{{L^2}(\Omega )}} + \tau {{ \mathbfit{A}}_2}{\left\| {\displaystyle\frac{\partial }{{\partial x}}\left( {{\cal P}_h^1\widehat { \bm{{\cal U}}}_r^n - \widehat { \bm{{\cal U}}}_r^n} \right)} \right\|_{{L^2}(\Omega )}} + \tau {{ \mathbfit{A}}_3}{\left\| {\displaystyle\frac{\partial }{{\partial y}}\left( {{\cal P}_h^1\widehat { \bm{{\cal U}}}_r^n - \widehat { \bm{{\cal U}}}_r^n} \right)} \right\|_{{L^2}(\Omega )}}\\
\\
 + {\left\| {{\cal P}_h^1\widehat { \bm{{\cal U}}}_r^{n - 1} - \widehat { \bm{{\cal U}}}_r^{n - 1}} \right\|_{{L^2}(\Omega )}} + \tau {{ \mathbfit{A}}_2}{\left\| {\displaystyle\frac{\partial }{{\partial x}}\left( {{\cal P}_h^1\widehat { \bm{{\cal U}}}_r^{n - 1} - \widehat { \bm{{\cal U}}}_r^{n - 1}} \right)} \right\|_{{L^2}(\Omega )}} + \tau {{ \mathbfit{A}}_3}{\left\| {\displaystyle\frac{\partial }{{\partial y}}\left( {{\cal P}_h^1\widehat { \bm{{\cal U}}}_r^{n - 1} - \widehat { \bm{{\cal U}}}_r^{n - 1}} \right)} \right\|_{{L^2}(\Omega )}},
\end{array}\]
which concludes the proof.
\end{proof}


\begin{Theorem}
Let ${  { \bm{{\cal U}}}_r^n}$ and ${{ \bm{{\cal U}}}_{h}^n}$  be solutions of (\ref{25P}) and  (\ref{26P}), respectively. Then  
\begin{equation}\label{E1}
{\left\|\mathbb{E}[ {\widehat { \bm{{\cal U}}}_r^n - { \bm{{\cal U}}}_{r,d}^n}] \right\|_{{L^2}(\Omega )}} \le C({\tau ^2} + N^{1-\nu}).
\end{equation}
\end{Theorem}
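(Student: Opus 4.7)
The plan is to estimate the total error $\widehat{\bm{{\cal U}}}_r^n - {\bm{{\cal U}}}_{r,d}^n$ by the standard projection–error splitting
\[
\widehat{\bm{{\cal U}}}_r^n - {\bm{{\cal U}}}_{r,d}^n = \bigl(\widehat{\bm{{\cal U}}}_r^n - \mathcal{P}_h^1\widehat{\bm{{\cal U}}}_r^n\bigr) + \bigl(\mathcal{P}_h^1\widehat{\bm{{\cal U}}}_r^n - {\bm{{\cal U}}}_{r,d}^n\bigr) =: \bm{\rho}^n + \bm{\theta}^n.
\]
The projection piece $\bm{\rho}^n$ is controlled directly by the interpolation estimate \eqref{version6}, which gives $\|\bm{\rho}^n\|_{L^2(\Omega)} \le C N^{1-\upsilon}$. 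The remainder $\bm{\theta}^n$ lies in the discrete space $\mathbf{V}_N^0$, so it is a legitimate test function.

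Next I would subtract the fully discrete scheme \eqref{26P} from the semi-discrete scheme \eqref{25P} and insert the projection in and out so that the terms involving $\widehat{\bm{{\cal U}}}_r^n$ get replaced by $\mathcal{P}_h^1\widehat{\bm{{\cal U}}}_r^n$; this produces an equation of the form
\[
{\mathbfit{H}}_1(\bm{\theta}^n,\bm{\chi}_h) + \tfrac{\tau}{2}{\mathbfit{I}}(\zeta(x)\nabla\bm{\theta}^n,\nabla\bm{\chi}_h) - \tfrac{\tau}{2}{\mathbfit{I}}(\bm{\theta}^n,\partial_x\bm{\chi}_h+\partial_y\bm{\chi}_h) = \text{RHS}_{n-1}(\bm{\theta}^{n-1}) + (\mathcal{G}_{r,d}^n,\bm{\chi}_h) + \tau(\mathbfit{R}_t^n,\bm{\chi}_h) - \tau \mathbfit{N}(\mathbfit{F}^{n-1}-\mathbfit{F}_h^{n-1},\bm{\chi}_h),
\]
where $\mathcal{G}_{r,d}^n$ is precisely the quantity analyzed in the preceding lemma (bounded by $C N^{1-\upsilon}$), $\mathbfit{R}_t^n$ is the Crank--Nicolson truncation residual with $|\mathbfit{R}_t^n|\le C\tau^2$, and the nonlinear discrepancy $\mathbfit{F}^{n-1}-\mathbfit{F}_h^{n-1}$ is Lipschitz-bounded by $\bm{L}\|\bm{\theta}^{n-1}+\bm{\rho}^{n-1}\|$. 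Setting $\bm{\chi}_h = \bm{\theta}^n$ and imitating the Cauchy--Schwarz/energy manipulation used in Theorem \ref{Th1} yields, after taking expectations (which annihilates the Wiener increments appearing linearly),
\[
\|\mathbb{E}[\bm{\theta}^n]\|_{\mathbfit{H}_w(\Omega)}^2 \le \|\mathbb{E}[\bm{\theta}^{n-1}]\|_{\mathbfit{H}_w(\Omega)}^2 + C\tau\bigl(\|\mathbb{E}[\bm{\theta}^n]\|^2+\|\mathbb{E}[\bm{\theta}^{n-1}]\|^2\bigr) + C\tau\bigl(\tau^4 + N^{2(1-\upsilon)}\bigr).
\]

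Summing from $m=1$ to $n$, invoking $\bm{\theta}^0=\mathbf{0}$, and applying the discrete Gronwall inequality exactly as in Theorem \ref{Th1} produces $\|\mathbb{E}[\bm{\theta}^n]\|_{L^2(\Omega)} \le C(\tau^2 + N^{1-\upsilon})$. Combining this with the projection bound on $\bm{\rho}^n$ via the triangle inequality delivers \eqref{E1}.

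The principal obstacle I anticipate is the nonlinear coupling: the Lipschitz estimate of $\mathbfit{F}-\mathbfit{F}_h$ introduces the previous-step error $\bm{\theta}^{n-1}+\bm{\rho}^{n-1}$ on the right-hand side, so the projection-error contribution must be absorbed carefully into the Gronwall constant rather than appearing multiplied by $\tau$-independent factors that would spoil the order. A secondary subtlety is the treatment of the stochastic forcing terms $(W^n,\mathbf{V})$ and $(W^{n-1},\mathbf{V})$; because they appear identically in \eqref{25P} and \eqref{26P}, taking the expectation after differencing removes them cleanly, which is what makes the $L^2$-in-$\Omega$ bound for $\mathbb{E}[\cdot]$ (rather than for the pathwise solution) natural and consistent with the statement of the theorem.
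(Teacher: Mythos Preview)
Your proposal is correct and follows essentially the same route as the paper: the projection--error splitting into $\bm{\rho}^n$ and $\bm{\theta}^n$ corresponds exactly to the paper's decomposition via $\bm{\eta}_h^{1,n}$ and $\bm{\varpi}_h^{1,n}$, the collected projection residual you call $\mathcal{G}_{r,d}^n$ is the paper's $\mathbfit{\Phi}_h^{1,n}$, and the test-by-$\bm{\theta}^n$/energy/Gronwall chain mirrors the paper's argument step for step. Your write-up is in fact more explicit than the paper's about the cancellation of the Wiener increments and about the $\bm{\rho}^{n-1}$ contribution inside the Lipschitz term.
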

\begin{proof} Defining $\mathbfit{Z}^n:=\mathbb{E}[\textbf{u}^n-\textbf{U}_h^n]$
and subtracting \eqref{26P} from (\ref{25P}) give rise to
{\allowdisplaybreaks
\begin{align}\label{28P}\nonumber
& {{ \mathbfit{H}}_1}\left( \mathbfit{Z}^n,\textbf{v}  \right) \frac{{\tau}}{2}{ \mathbfit{I}}\left( {\zeta (x)\nabla {{ \mathbfit{Z}}^n} ,\nabla { \bm{\chi}}} \right)  
- \frac{{\tau}}{2}{ \mathbfit{I}}\left( {\mathbfit{Z}^n,\frac{\partial }{{\partial x}}{ \bm{\chi}}} \right)  - \frac{{\tau}}{2}{ \mathbfit{I}}\left( {\mathbfit{Z}^n,\frac{\partial }{{\partial y}}{ \bm{\chi}}} \right)=  \nonumber\\ \nonumber 
& \textbf{M}_2\left(  \mathbfit{Z}^{n-1},{ \bm{\chi}} \right)  - \frac{{\tau}}{2}{ \mathbfit{I}}\left( \zeta (x)\nabla  \mathbfit{Z}^{n-1}  \right) +
  \frac{{\tau}}{2}{ \mathbfit{I}}\left( \mathbfit{Z}^{n-1},\frac{\partial }{{\partial x}}{ \bm{\chi}} \right)\ 
 \nonumber\\\nonumber
 & + \frac{{\tau}}{2}{ \mathbfit{I}}\left( \mathbfit{Z}^{n-1} ,\frac{\partial }{{\partial y}}{ \bm{\chi}} \right) + \tau{ \mathbfit{N}}\left( {{ \mathbfit{F}^{n-1}}-{ \mathbfit{\overline F}^{n-1}},{ \bm{\chi}}} \right)+ \tau\left( {{ \mathbfit{R}_{t}^n},{ \bm{\chi}}} \right), \qquad\forall ~ { \bm{\chi}} \in { \mathbfit{H}}_0^1(\Omega ).
\end{align}
\allowdisplaybreaks} 
Then, we define $\bm{\varpi} _h^{1,n} := \mathbb{E} [{\mathbfit{P}}_h^1{{\bm{{\cal U}}}^n} - {\bm{{\cal U}}}_h^n]$ and $\bm{\eta} _{h}^{1,n}{\mkern 1mu}:= \mathbb{E} [{\bm{{\cal U}}}^n - {\mathbfit{P}}_h^1{{\bm{{\cal U}}}^n}]$, then
{\allowdisplaybreaks
\begin{eqnarray*}\label{30P}
{{\mathbfit{H}}_1}\left( {{\bm{\varpi}}_h^{1,n},{ \bm{\chi}}} \right) &+&\frac{{\tau}}{2}{\mathbfit{I}}\left( {\zeta ({ \mathbfit{x}})\nabla {\bm{\varpi}}_h^{1,n},\nabla { \bm{\chi}}} \right)  - \frac{{\tau}}{2}{\mathbfit{I}}\left( {{\bm{\varpi}}_h^{1,n},\frac{\partial }{{\partial x}}{ \bm{\chi}}} \right)  - \frac{{\tau}}{2}{ \mathbfit{I}}\left( {{\bm{\varpi}}_h^{1,n},\frac{\partial }{{\partial y}}{\bm{\chi}}} \right) \\
\\
 &=& {{\mathbfit{H}}_2}\left( {{\bm{\varpi}}_h^{1,n - 1},{\bm{\chi}}} \right)  - \frac{{\tau}}{2}{\mathbfit{I}}\left( {\zeta ({\mathbfit{x}})\nabla {\bm{\varpi}}_h^{1,n - 1},\nabla {\bm{\chi}}} \right) \\
\\
 &+& \frac{{\tau}}{2}{\mathbfit{I}}\left( {{\bm{\varpi}}_h^{1,n - 1},\frac{\partial }{{\partial x}}{\bm{\chi}}} \right)  + \frac{{\tau}}{2}{\mathbfit{I}}\left( {{\bm{\varpi}}_h^{1,n - 1},\frac{\partial }{{\partial y}}{\bm{\chi}}} \right) \\
\\
& -& \tau{\mathbfit{N}}\left( {{{\mathbfit{F}}^{n - 1}} - {{{\mathbfit{\bar F}}}^{n - 1}},{\bm{\chi}}} \right)  + \tau\left( {{\mathbfit{R}}_t^n,{ \bm{\chi}}} \right)  - {{\mathbfit{H}}_1}\left( {{\mathbfit{\Lambda }}_h^{1,n},{\bm{\chi}}} \right) {\mkern 1mu} \\
\\
 &+& \frac{{\tau}}{2}{\mathbfit{I}}\left( {{\mathbfit{\Lambda }}_h^{1,n},\frac{\partial }{{\partial x}}{\bm{\chi}}} \right)  + \frac{{\tau}}{2}{ \mathbfit{I}}\left( {{\mathbfit{\Lambda }}_h^{1,n},\frac{\partial }{{\partial y}}{\bm{\chi}}} \right)  + {{\mathbfit{H}}_2}\left( {{\mathbfit{\Lambda }}_h^{1,n - 1},{\bm{\chi}}} \right) \\
\\
& +& \frac{{\tau}}{2}{\mathbfit{I}}\left( {{\mathbfit{\Lambda }}_h^{1,n - 1},\frac{\partial }{{\partial x}}{\bm{\chi}}} \right)  + \frac{{\tau}}{2}{\mathbfit{I}}\left( {{\mathbfit{\Lambda }}_h^{1,n - 1},\frac{\partial }{{\partial y}}{\bm{\chi}}} \right) ,\,\,\,\,\,\,\,\,\forall {\mkern 1mu} \,{\mkern 1mu} {\bm{\chi}} \in {\mathbfit{H}}_0^1(\Omega ).
\end{eqnarray*}
\allowdisplaybreaks}
Thus, by assuming
\begin{eqnarray*}\label{31P}
\left( {{\mathbfit{\Phi }}_h^{1,n},{\bm{\chi}}} \right)  &=&  - {{\mathbfit{H}}_1}\left( {{\mathbfit{\Lambda }}_h^{1,n},{\bm{\chi}}} \right) {\mkern 1mu}  + \frac{{\tau}}{2}{\mathbfit{I}}\left( {{\mathbfit{\Lambda }}_h^{1,n},\frac{\partial }{{\partial x}}{\bm{\chi}}} \right)  + \frac{{\tau}}{2}{\mathbfit{I}}\left( {{\mathbfit{\Lambda }}_h^{1,n},\frac{\partial }{{\partial y}}{\bm{\chi}}} \right) \\
\\
 &+& {{\mathbfit{H}}_2}\left( {{\mathbfit{\Lambda }}_h^{1,n - 1},{\bm{\chi}}} \right)  + \frac{{\tau}}{2}{\mathbfit{I}}\left( {{\mathbfit{\Lambda }}_h^{1,n - 1},\frac{\partial }{{\partial x}}{\bm{\chi}}} \right)  + \frac{{\tau}}{2}{\mathbfit{I}}\left( {{\mathbfit{\Lambda }}_h^{1,n - 1},\frac{\partial }{{\partial y}}{\bm{\chi}}} \right) ,
\end{eqnarray*}
we have
\begin{eqnarray*}\label{32P}
{{\mathbfit{H}}_1}\left( {{\bm{\varpi}}_h^{1,n},{\bm{\chi}}} \right) &+&\frac{{\tau}}{2}{\mathbfit{I}}\left( {\zeta ({\mathbfit{x}})\nabla {\bm{\varpi}}_h^{1,n},\nabla {\bm{\chi}}} \right)  - \frac{{\tau}}{2}{\mathbfit{I}}\left( {{ \bm{\varpi}}_h^{1,n},\frac{\partial }{{\partial x}}{ \bm{\chi}}} \right)  - \frac{{\tau}}{2}{ \mathbfit{I}}\left( {{ \bm{\varpi}}_h^{1,n},\frac{\partial }{{\partial y}}{ \bm{\chi}}} \right) \\
\\
 &=& {{ \mathbfit{H}}_2}\left( {{ \bm{\varpi}}_h^{1,n - 1},{ \bm{\chi}}} \right)  - \frac{{\tau}}{2}{ \mathbfit{I}}\left( {\zeta ({ \mathbfit{x}})\nabla { \bm{\varpi}}_h^{1,n - 1},\nabla { \bm{\chi}}} \right)  + \frac{{\tau}}{2}{ \mathbfit{I}}\left( {{ \bm{\varpi}}_h^{1,n - 1},\frac{\partial }{{\partial x}}{ \bm{\chi}}} \right) \\
\\
 &+& \frac{{\tau}}{2}{ \mathbfit{I}}\left( {{ \bm{\varpi}}_h^{1,n - 1},\frac{\partial }{{\partial y}}{ \bm{\chi}}} \right) - \tau{ \mathbfit{N}}\left( {{{ \mathbfit{F}}^{n - 1}} - {{{ \mathbfit{\bar F}}}^{n - 1}},{ \bm{\chi}}} \right)  + \tau\left( {{ \mathbfit{R}}_t^n,{ \bm{\chi}}} \right)  + \left( {{ \mathbfit{\Phi }}_h^{1,n},{ \bm{\chi}}} \right),\,\,\,\forall {\mkern 1mu} \,{\mkern 1mu} { \bm{\chi}} \in { \mathbfit{H}}_0^1(\Omega ).
\end{eqnarray*}
Setting ${{ \bm{\chi}}_r}=\rm X _{r,d}^n$, gives
\begin{eqnarray*}\label{33P}
{{ \mathbfit{H}}_1}\left( {{ \bm{\varpi}}_h^{1,n},{ \bm{\varpi}}_h^{1,n}} \right) {\rm{ + }}\frac{{\tau}}{2}{ \mathbfit{I}}\left( {\zeta ({ \mathbfit{x}})\nabla { \bm{\varpi}}_h^{1,n},\nabla { \bm{\varpi}}_h^{1,n}} \right)  - \frac{{\tau}}{2}{ \mathbfit{I}}\left( {{ \bm{\varpi}}_h^{1,n},\frac{\partial }{{\partial x}}{ \bm{\varpi}}_h^{1,n}} \right)  - \frac{{\tau}}{2}{ \mathbfit{I}}\left( {{ \bm{\varpi}}_h^{1,n},\frac{\partial }{{\partial y}}{ \bm{\varpi}}_h^{1,n}} \right) \\
\\
 = {{ \mathbfit{H}}_2}\left( {{ \bm{\varpi}}_h^{1,n - 1},{ \bm{\varpi}}_h^{1,n}} \right)  - \frac{{\tau}}{2}{ \mathbfit{I}}\left( {\zeta ({ \mathbfit{x}})\nabla { \bm{\varpi}}_h^{1,n - 1},\nabla { \bm{\varpi}}_h^{1,n}} \right)  + \frac{{\tau}}{2}{ \mathbfit{I}}\left( {{ \bm{\varpi}}_h^{1,n - 1},\frac{\partial }{{\partial x}}{ \bm{\varpi}}_h^{1,n}} \right) \\
\\
 + \frac{{\tau}}{2}{ \mathbfit{I}}\left( {{ \bm{\varpi}}_h^{1,n - 1},\frac{\partial }{{\partial y}}{ \bm{\varpi}}_h^{1,n}} \right)  - \tau{ \mathbfit{N}}\left( {{{ \mathbfit{F}}^{n - 1}} - {{{ \mathbfit{\bar F}}}^{n - 1}},{ \bm{\varpi}}_h^{1,n}} \right)  + \tau\left( {{ \mathbfit{R}}_t^n,{ \bm{\varpi}}_h^{1,n}} \right)  + \left( {{ \mathbfit{\Phi }}_h^{1,n},{ \bm{\varpi}}_h^{1,n}} \right) 
\end{eqnarray*}
Thus, we can write
{\allowdisplaybreaks
\begin{eqnarray*}\label{34P}
\left\| {{{ \mathbfit{H}}_1}} \right\|\left\| {{ \bm{\varpi}}_h^{1,n}} \right\|_{{L^2}(\Omega )}^2&+&\frac{{\tau}}{2}{ \mathbfit{I}}\left\| {\zeta ({ \mathbfit{x}})} \right\|\left\| {\nabla { \bm{\varpi}}_h^{1,n}} \right\|_{{L^2}(\Omega )}^2\\
\\
& \le& \frac{{\tau}}{2}{ \mathbfit{I}}\left( {{ \bm{\varpi}}_h^{1,n},\frac{\partial }{{\partial x}}{ \bm{\varpi}}_h^{1,n}} \right)  + \frac{{\tau}}{2}{ \mathbfit{I}}\left( {{ \bm{\varpi}}_h^{1,n},\frac{\partial }{{\partial y}}{ \bm{\varpi}}_h^{1,n}} \right) \\
\\
 &+& \left\| {{{ \mathbfit{H}}_2}} \right\|\left\| {{ \bm{\varpi}}_h^{1,n - 1}} \right\|\left\| {{ \bm{\varpi}}_h^{1,n}} \right\| + \frac{{\tau}}{2}{ \mathbfit{I}}\left\| {\zeta ({ \mathbfit{x}})} \right\|\left\| {\nabla { \bm{\varpi}}_h^{1,n - 1}} \right\|\left\| {\nabla { \bm{\varpi}}_h^{1,n}} \right\|\\
\\
 &+& \frac{{\tau}}{2}{ \mathbfit{I}}\left( {{ \bm{\varpi}}_h^{1,n - 1},\frac{\partial }{{\partial x}}{ \bm{\varpi}}_h^{1,n}} \right)  + \frac{{\tau}}{2}{ \mathbfit{I}}\left( {{ \bm{\varpi}}_h^{1,n - 1},\frac{\partial }{{\partial y}}{ \bm{\varpi}}_h^{1,n}} \right) \\
\\
 &-& \tau{ \mathbfit{LN}}\left\| {{ \bm{\varpi}}_h^{1,n - 1}} \right\|\left\| {{ \bm{\varpi}}_h^{1,n}} \right\| + \tau\left\| {{ \mathbfit{R}}_t^n} \right\|\left\| {{ \bm{\varpi}}_h^{1,n}} \right\| + \tau\left\| {{ \mathbfit{\Phi }}_h^{1,n}} \right\|\left\| {{ \bm{\varpi}}_h^{1,n}} \right\|.
\end{eqnarray*}
\allowdisplaybreaks}
Also, let
\begin{equation}
2\varpi \le \left\| {{{ \mathbfit{A}}_1}} \right\| \le \varpi,\,\,\,\,\,\,\,\,\,\,\,\,\,\,\,\,\,\left\| {{{ \mathbfit{A}}_2}} \right\|,\left\| {{{ \mathbfit{A}}_3}} \right\| \le C .
\end{equation}
As a result
{\allowdisplaybreaks
\begin{eqnarray}\label{35P}
\frac{1}{2}\left\| {{ \bm{\varpi}}_h^{1,n}} \right\|_{{{ \bm{L}}^2}(\Omega )}^2 &+& \frac{{\tau}}{4}\left\| {\zeta ({ \mathbfit{x}})} \right\|\left\| {\nabla {{ \bm{\varpi}}_h^{1,n}}} \right\|_{{{ \bm{L}}^2}(\Omega )}^2 \\ \nonumber
\\
&\le& \frac{1}{2}\left\| {{{ \mathbfit{H}}_1}} \right\|\left\| {{ \bm{\varpi}}_h^{1,{n-1}}} \right\|_{{{ \bm{L}}^2}(\Omega )}^2 + \frac{{\tau}}{2}\left\| {\zeta ({ \mathbfit{x}})} \right\|\left\| {\nabla {{ \bm{\varpi}}_h^{1,{n-1}}}} \right\|_{{{ \bm{L}}^2}(\Omega )}^2\nonumber\\\nonumber
\\\nonumber
&+&\frac{{C_1 \bm{L}\tau}}{{2\left\| {\zeta ({ \mathbfit{x}})} \right\|}}\left\| {{ \bm{\varpi}}_h^{1,n}}\right\|_{{{ \bm{L}}^2}(\Omega )}^2 + \frac{{C_2 \bm{L}\tau}}{{2\left\| {\zeta ({ \mathbfit{x}})} \right\|}}\left\|{{ \bm{\varpi}}_h^{1,{n-1}}} \right\|_{{{ \bm{L}}^2}(\Omega )}^2\nonumber\\\nonumber
\\\nonumber
&+&C\tau\left\| {{ \mathbfit{R}}_t^n} \right\|_{{L^2}(\Omega )}^2 + C\tau\left\| {{ \bm{\varpi}}_h^{1,n}} \right\|_{{L^2}(\Omega )}^2 + C\tau\left\| {{ \mathbfit{\Phi }}_h^{1,n}} \right\|_{{L^2}(\Omega )}^2 + C\tau\left\| {{ \bm{\varpi}}_h^{1,n}} \right\|_{{L^2}(\Omega )}^2.
\end{eqnarray}
\allowdisplaybreaks}
Applying the  definition
\[\left\| {{ \bm{\varpi}}_h^{1,n}} \right\|_{{{ \mathbfit{H}}_w}(\Omega )}^2 := \left\| {{ \bm{\varpi}}_h^{1,n}} \right\|_{{{ \bm{L}}^2}(\Omega )}^2 + \frac{1}{2}\tau \varpi\left\| {\nabla {{ \bm{\varpi}}_h^{1,n}}} \right\|_{{{ \bm{L}}^2}(\Omega )}^2,\]
Eq. \eqref{35P} can be written as
{\allowdisplaybreaks
\begin{eqnarray}\label{36P}
 \left\| {{ \bm{\varpi}}_h^{1,n}} \right\|_{{{ \bm{L}}^2}(\Omega )}^2 &+& \frac{{\tau}}{2}\left\| {\zeta ({ \mathbfit{x}})} \right\|\left\| {\nabla {{ \bm{\varpi}}_h^{1,n}}} \right\|_{{{ \bm{L}}^2}(\Omega )}^2 \\ \nonumber
\\
&\le& \left\| {{{ \mathbfit{H}}_1}} \right\|\left\| {{ \bm{\varpi}}_h^{1,{n-1}}} \right\|_{{{ \bm{L}}^2}(\Omega )}^2 + \frac{{\tau}}{2}\left\| {\zeta ({ \mathbfit{x}})} \right\|\left\| {\nabla {{ \bm{\varpi}}_h^{1,{n-1}}}} \right\|_{{{ \bm{L}}^2}(\Omega )}^2\nonumber\\\nonumber
\\\nonumber
&+&\frac{{C_1^* \bm{L}\tau}}{{2\left\| {\zeta ({ \mathbfit{x}})} \right\|}}\left\| {{ \bm{\varpi}}_h^{1,n}}\right\|_{{{ \bm{L}}^2}(\Omega )}^2 + \frac{{C_2^* \bm{L}\tau}}{{2\left\| {\zeta ({ \mathbfit{x}})} \right\|}}\left\|{{ \bm{\varpi}}_h^{1,{n-1}}} \right\|_{{{ \bm{L}}^2}(\Omega )}^2\nonumber\\\nonumber
\\\nonumber
&+&C\tau\left\| {{ \mathbfit{R}}_t^n} \right\|_{{L^2}(\Omega )}^2  + C\tau\left\| {{ \mathbfit{\Phi }}_h^{1,n}} \right\|_{{L^2}(\Omega )}^2.
\end{eqnarray}
\allowdisplaybreaks}
Now, from the above Eq., we have
{\allowdisplaybreaks
\begin{eqnarray*}\label{37P}
\sum\limits_{m = 1}^n {\left\| {{ \bm{\varpi}}_h^{1,m}} \right\|_{{L^2}(\Omega )}^2}  &+& \frac{{\tau}}{2}\left\| {\zeta ({ \mathbfit{x}})} \right\|\sum\limits_{m = 1}^n {\left\| {\nabla { \bm{\varpi}}_h^{1,m}} \right\|_{{L^2}(\Omega )}^2} \\
\\
 &\le &\sum\limits_{m = 1}^n {\left\| {{ \bm{\varpi}}_h^{1,m - 1}} \right\|_{{L^2}(\Omega )}^2}  + \frac{{\tau}}{2}\left\| {\zeta ({ \mathbfit{x}})} \right\|\sum\limits_{m = 1}^n {\left\| {\nabla { \bm{\varpi}}_h^{1,m - 1}} \right\|_{{L^2}(\Omega )}^2} \\
\\
 &+& \frac{{C_1^*{ \bm{L}}\tau}}{{2\left\| {\zeta ({ \mathbfit{x}})} \right\|}}\sum\limits_{m = 1}^n {\left\| {{ \bm{\varpi}}_h^{1,m}} \right\|_{{L^2}(\Omega )}^2}  + \frac{{C_2^*{ \bm{L}}\tau}}{{2\left\| {\zeta ({ \mathbfit{x}})} \right\|}}\sum\limits_{m = 1}^n {\left\| {{ \bm{\varpi}}_h^{1,m - 1}} \right\|_{{L^2}(\Omega )}^2} \\
\\
 &+& C\tau\sum\limits_{m = 1}^n {\left\| {{ \mathbfit{R}}_t^m} \right\|_{{L^2}(\Omega )}^2}  + C\tau\sum\limits_{m = 1}^n {\left\| {{ \mathbfit{\Phi }}_h^{1,m}} \right\|_{{L^2}(\Omega )}^2} .
\end{eqnarray*}
\allowdisplaybreaks}
By engaging the Gronwall lemma, the above relation  can be  rewritten as
{\allowdisplaybreaks
\begin{eqnarray*}
\left\| {{ \bm{\varpi}}_h^{1,n}} \right\|_{{H_\omega }(\Omega )}^2 &\le& \frac{{C{ \bm{L}}\tau}}{{\left\| {\zeta ({ \mathbfit{x}})} \right\|}}\sum\limits_{m = 1}^n {\left\| {{ \bm{\varpi}}_h^{1,m}} \right\|_{{L^2}(\Omega )}^2}  + C\tau\sum\limits_{m = 1}^n {\left\| {{ \mathbfit{R}}_t^m} \right\|_{{L^2}(\Omega )}^2}  + C\tau\sum\limits_{m = 1}^n {\left\| {{ \mathbfit{\Phi }}_h^{1,m}} \right\|_{{L^2}(\Omega )}^2} \\
\\
 &\le& \frac{{C{ \bm{L}}\tau}}{{\left\| {\zeta ({ \mathbfit{x}})} \right\|}}\sum\limits_{m = 1}^n {\left\| {{ \bm{\varpi}}_h^{1,m}} \right\|_{{L^2}(\Omega )}^2}  + Cn\tau\left\| {{ \mathbfit{R}}_t^m} \right\|_{{L^2}(\Omega )}^2 + Cn\tau\left\| {{ \mathbfit{\Phi }}_h^{1,m}} \right\|_{{L^2}(\Omega )}^2\\
\\
& \le& \left[ {Cn\tau\left\| {{ \mathbfit{R}}_t^m} \right\|_{{L^2}(\Omega )}^2 + Cn\tau\left\| {{ \mathbfit{\Phi }}_h^{1,m}} \right\|_{{L^2}(\Omega )}^2} \right]\exp \left( {\frac{{C{ \bm{L}}n\tau}}{{\left\| {\zeta ({ \mathbfit{x}})} \right\|}}} \right)\\
\\
& \le& \left[ {CT\tau^2 + Cn\tau\left( {\tau^2+ {N ^{1-\nu}}} \right)} \right]\exp \left( {\frac{{C{ \bm{L}}T}}{{\left\| {\zeta ({ \mathbfit{x}})} \right\|}}} \right)\\
\\
& \le& C{\left( {\tau^2 + {N ^{1-\nu}}} \right)^2}.
\end{eqnarray*}
\allowdisplaybreaks}
Thus we have
\begin{equation}
\left\| {{ \bm{\varpi}}_h^{1,n}} \right\|_{{{ \bm{L}}^2}(\Omega )}^{} \le \left\| {{ \bm{\varpi}}_h^{1,n}} \right\|_{{{ \mathbfit{H}}_w}(\Omega )}^{} \le C({\tau ^2} + {N ^{1-\nu}}).
\end{equation}
\end{proof}


\section{Numerical discussions}
\label{section4}
Here, we provide two numerical examples to check the accuracy and the efficiency of the proposed numerical procedure. In both cases, in order to estimate the expected value $M=1000$ independent random variables are used.
\subsection{Test problem 1}\label{E1}
For the first example, we study the following numerical example  
with $\Omega=[0,1]\times [0,1]$ as follows \cite{W.Liu,ilati2016remediation}
\begin{equation}
\left\{ {\begin{array}{*{20}{l}}
	{\displaystyle\frac{{\partial u}}{{\partial t}} + \left( {\frac{{\partial u}}{{\partial x}} + \frac{{\partial u}}{{\partial y}}} \right) - D\left( {\frac{{{\partial ^2}u}}{{\partial {x^2}}} + \frac{{{\partial ^2}u}}{{\partial {y^2}}}} \right) + 0.6{\varpi _p}\frac{{uv}}{{\left( {1 + u} \right)\left( {v + 2} \right)}} = f(x,y,t)+dW,}\\
	{}\\
	{\displaystyle\frac{{\partial v}}{{\partial t}} + \left( {\frac{{\partial v}}{{\partial x}} + \frac{{\partial v}}{{\partial y}}} \right) - D\left( {\frac{{{\partial ^2}v}}{{\partial {x^2}}} + \frac{{{\partial ^2}v}}{{\partial {y^2}}}} \right) + 0.6{\varpi _p}\frac{{uv}}{{\left( {1 + u} \right)\left( {v + 2} \right)}} = g(x,y,t)+dW,}\\
	{}\\
	{\displaystyle\frac{{\partial w}}{{\partial t}} + \left( {\frac{{\partial w}}{{\partial x}} + \frac{{\partial w}}{{\partial y}}} \right) - D\left( {\frac{{{\partial ^2}w}}{{\partial {x^2}}} + \frac{{{\partial ^2}w}}{{\partial {y^2}}}} \right) + 0.6{\varpi _p}\frac{{uv}}{{\left( {1 + u} \right)\left( {v + 2} \right)}} + 2w = h(x,y,t)+dW,}
	\end{array}} \right.
\end{equation}
\begin{figure}[h!]
	\begin{center}
		\includegraphics[width=8.5cm, height=6cm]{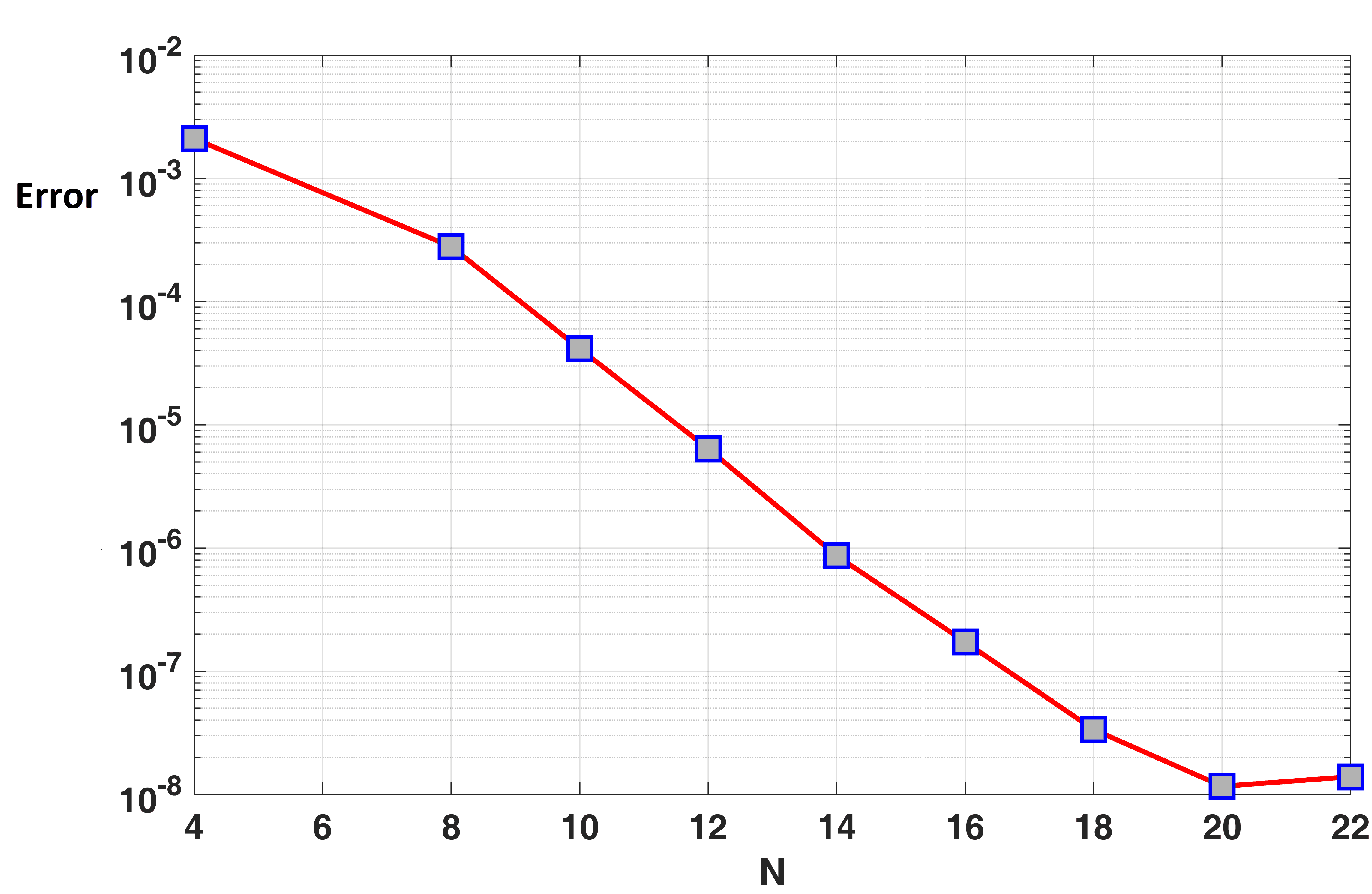}
		\includegraphics[width=8.5cm, height=6cm]{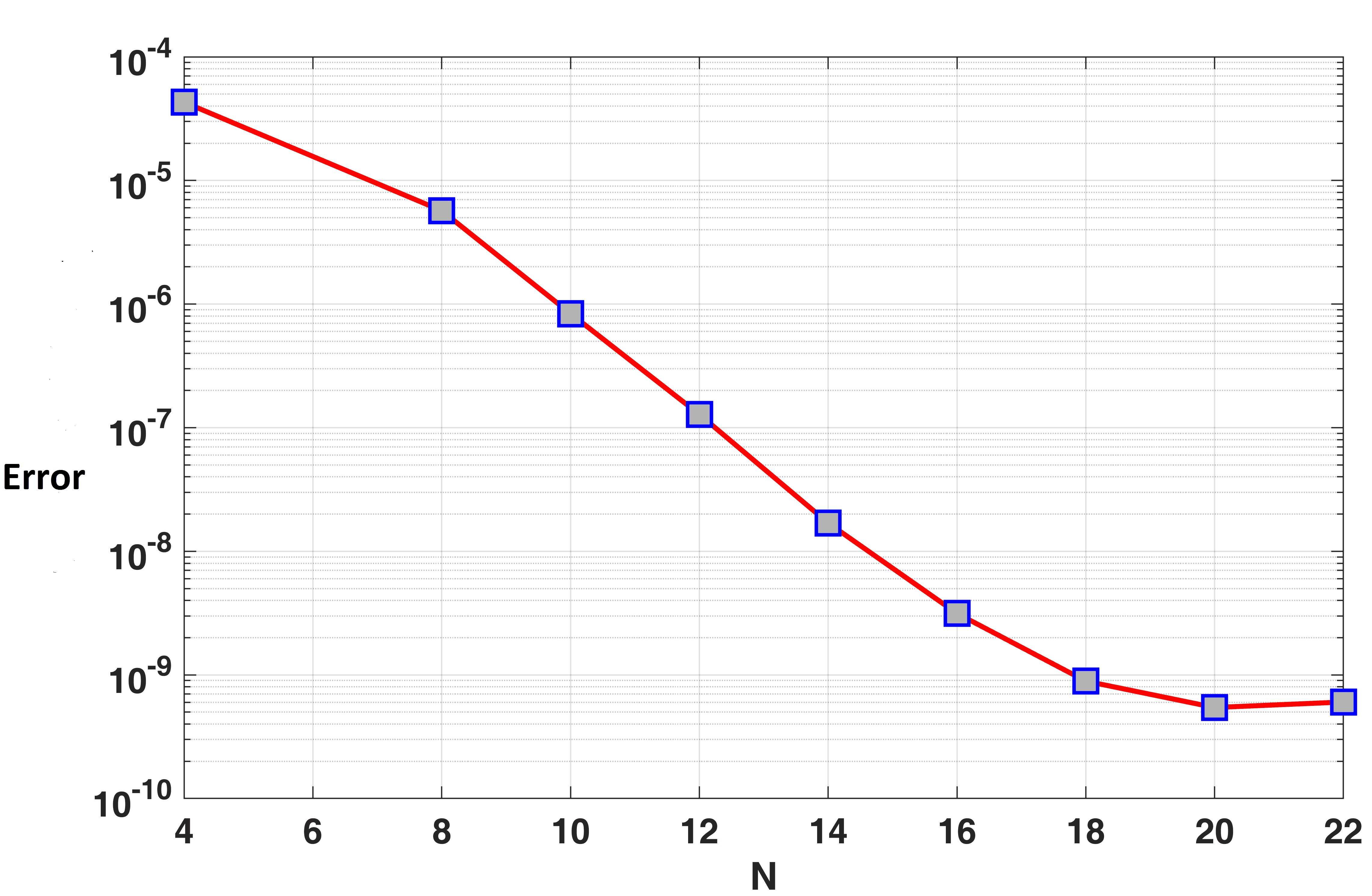}
		\caption{The computational error of expected value of the solution as a function of different number of basis functions (left panel $\tau=10^{-3}$ and right panel $\tau=10^{-4}$) for test problem 1.}\label{Fig1}
	\end{center}
\end{figure}

\begin{figure}[h!]
	\begin{center}
		\includegraphics[width=8.5cm, height=6cm]{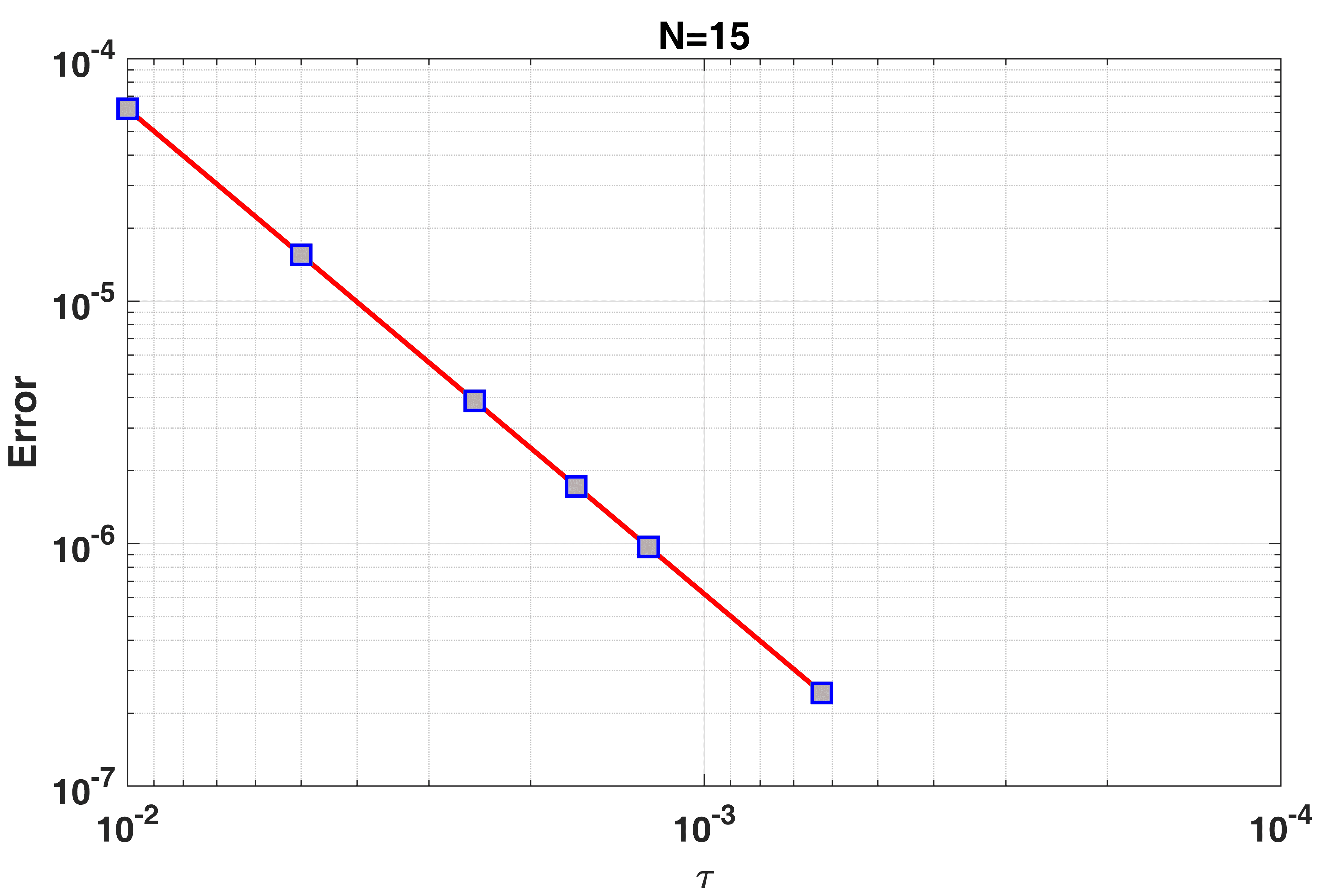}
		\includegraphics[width=8.5cm, height=6cm]{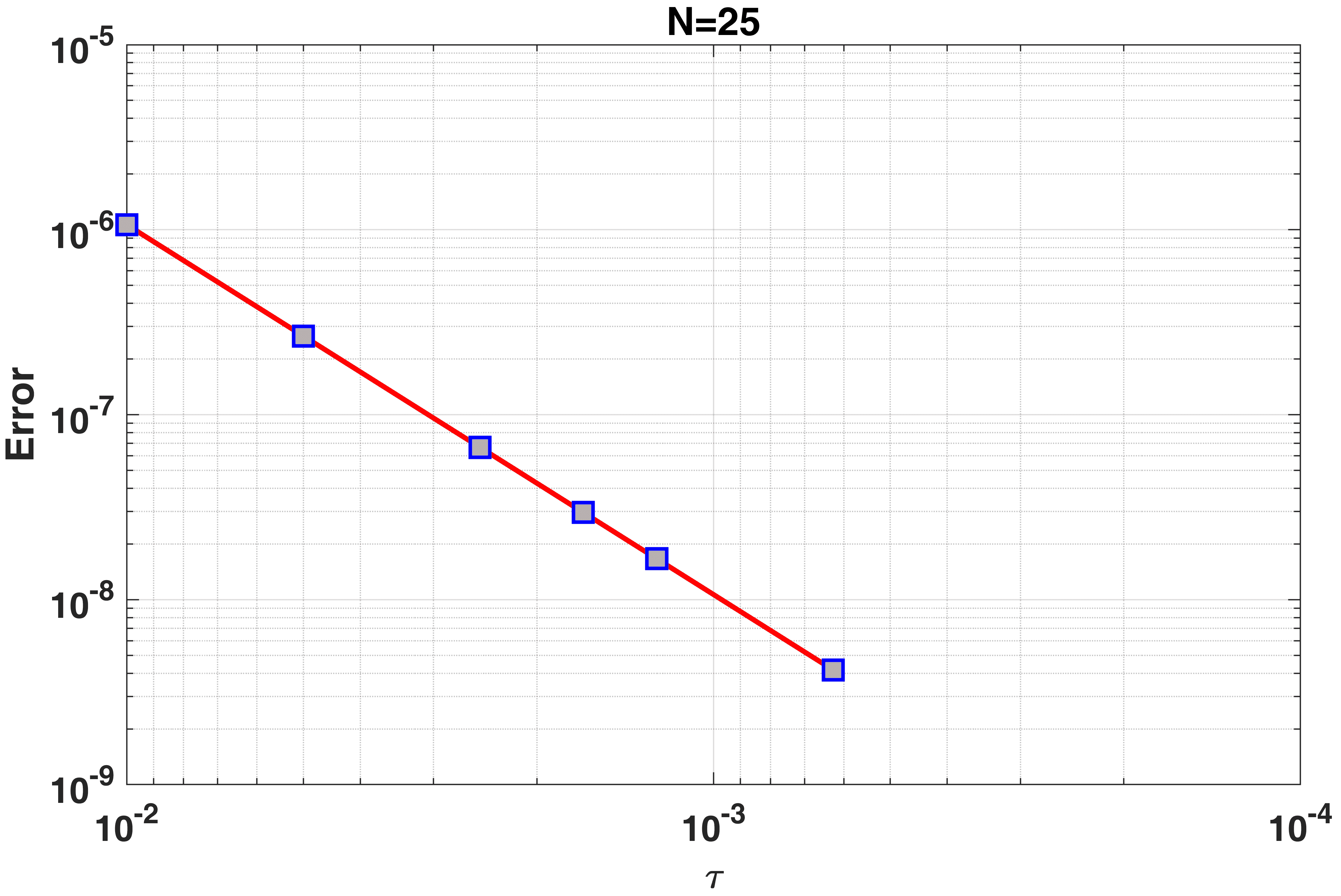}
		\caption{The computational error of expected value of the solution as a function of different number of basis functions (left panel $N=15$ and right panel $N=25$) for test problem 1.}\label{Fig10}
	\end{center}
\end{figure}

where the diffusion coefficient is $D={{10}^{ - 3}}$ and zero Dirichlet boundary conditions are applied. The initial conditions are
\begin{align}
u_0 = v_0 = w_0 = \sin (\pi x)\sin (\pi y). 
\end{align}
 We assume that the right hand sides are
\begin{flalign*}
f(x,y,t) &= (\pi -5)\text{e}^{-5t} \cos(\pi x) \sin(\pi y)\\
&+ \pi \text{e}^{-5t} \cos \left( {\pi y} \right)\sin \left( {\pi x} \right) + 2D{\pi ^2}\text{e}^{-5t}\sin(\pi x) \sin(\pi y)\\
& + 3\text{e}^{-10t} \sin(\pi x)^3  \sin(\pi y)^3 \left[  5 \left( \text{e}^{-2t} \sin(\pi x) \sin(\pi y)+2 \right) \left( \text{e}^{-5t}\sin(\pi x) \sin(\pi y)+1\right) \right]^{-1}.
\end{flalign*}
\begin{flalign*}
~~~\,g(x,y,t) &= \ee^{-2t} \left[ {\pi \cos \left( {\pi x} \right)\sin \left( {\pi y} \right) - 2\ee^{-2t}\sin \left( {\pi x} \right)\sin \left( {\pi y} \right)} +\pi \cos \left( {\pi y} \right)\sin \left( {\pi x} \right) + 2D{\pi ^2}\sin \left( {\pi {\rm{x}}} \right)u \sin(\pi y)\right]\\
&+ \ee^{-10t}\sin {\left( {\pi x} \right)^3}\sin {\left( {\pi y} \right)^3}{\left[ {10\left( {\ee^{-2t}\sin \left( {\pi x} \right)\sin \left( {\pi y} \right) + 2} \right)\left( {\ee^{-5t}\sin \left( {\pi x} \right)\sin \left( {\pi y} \right) + 1} \right)} \right]^{ - 1}}.
\end{flalign*}
\begin{flalign*}
h(x,y,t) &= \left( {\pi  - 1} \right)\ee^{-3t}\cos \left( {\pi x} \right)\sin \left( {\pi y} \right) + \pi \ee^{-3t}\cos \left( {\pi y} \right)\sin \left( {\pi x} \right)\\
&+ \ee^{-3t}\sin \left( {\pi x} \right)\left( {\pi \cos \left( {\pi y} \right) + 2D{\pi ^2}\sin \left( {\pi y} \right)} \right)\\
&+ 4{{\rm{e}}^{-10t}}\sin {\left( {\pi x} \right)^3}\sin {\left( {\pi y} \right)^3}{\left[ {5\left( {{{\rm{e}}^{ - 2t}}\sin \left( {\pi x} \right)\sin \left( {\pi y} \right) + 2} \right)\left( {{{\rm{e}}^{-5t}}\sin \left( {\pi x} \right)\sin \left( {\pi y} \right) + 1} \right)} \right]^{ - 1}}.
\end{flalign*}

In the deterministic case, the exact solution is
\[u(x,y,t) = \exp ( - 5t)\rho (x,y),{\mkern 1mu} {\mkern 1mu} \,\,\,\,\,v(x,y,t) = \exp ( - 2t)\rho (x,y),{\mkern 1mu} \,\,\,\,\,w(x,y,t) = \exp ( - 3t)\rho (x,y),\]
where $\rho (x,y) = \sin (\pi x)\sin (\pi y)$. In order to estimate the computational error, we use the reference solution with $N=30$ basis function. The developed LSEM method is used to approximate the expected value of the solution. In this example, we consider the summations of three computational errors with respect to $u$, $v$, and $w$ at $T=1$   where the results are shown in  
 Figure \ref{Fig1}  for different numbers of basis functions. As shown a noticeable error reduction has been achieved which indicates the method efficiency.
  We also estimated the solution for two different time steps, i.e., $\tau=10^{-3}$ and $\tau=10^{-4}$. 
  The computational error of expected value of the solution as a function of different number of basis functions (left panel $N=15$ and right panel $N=25$) has been depicted in  Figure \ref{Fig10} for test problem 1. The results show that as we expected smaller time steps gives rises to better error convergence.

\begin{figure}[h!]
	\begin{center}
		\includegraphics[width=8.5cm, height=6cm]{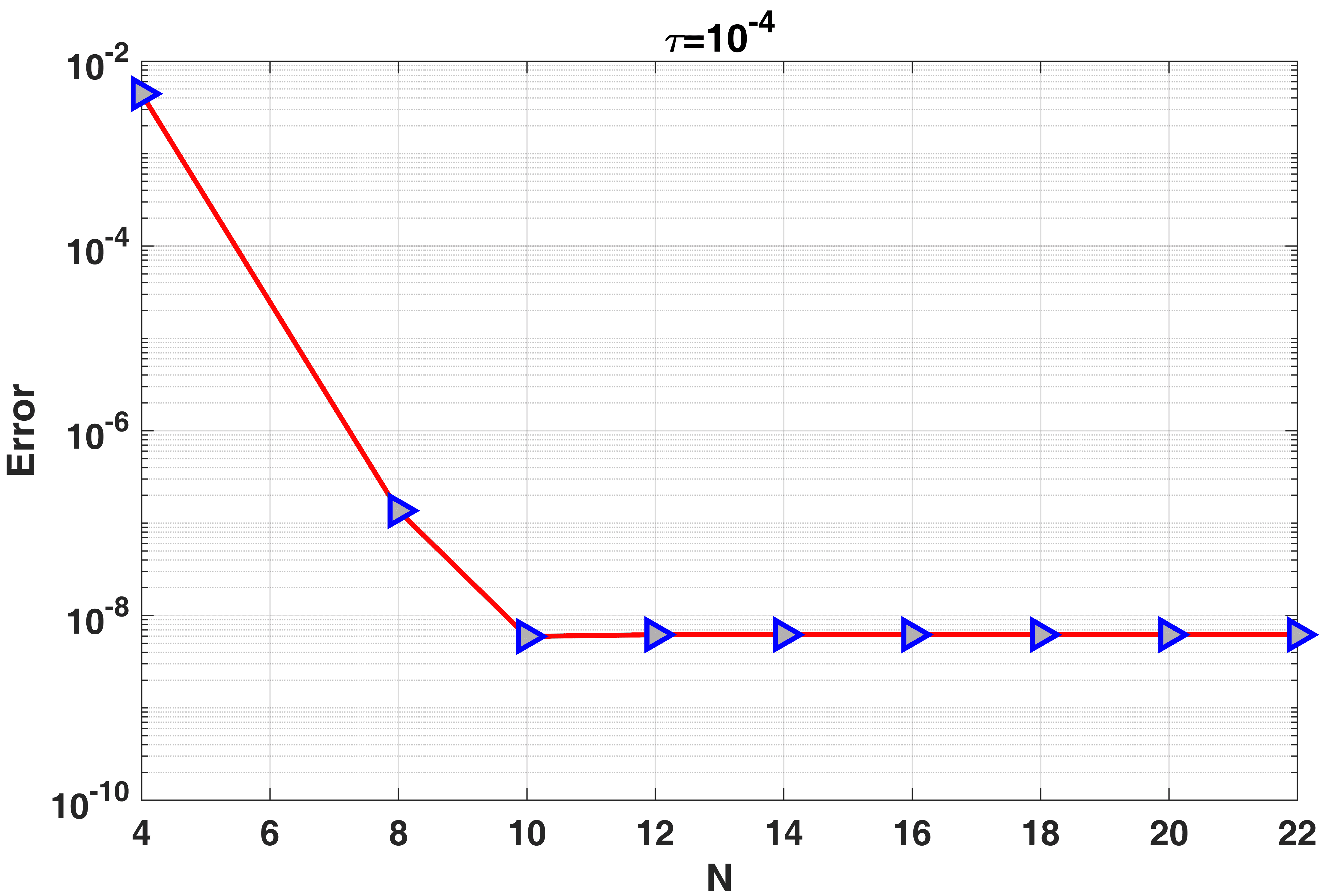}
		\includegraphics[width=8.5cm, height=6cm]{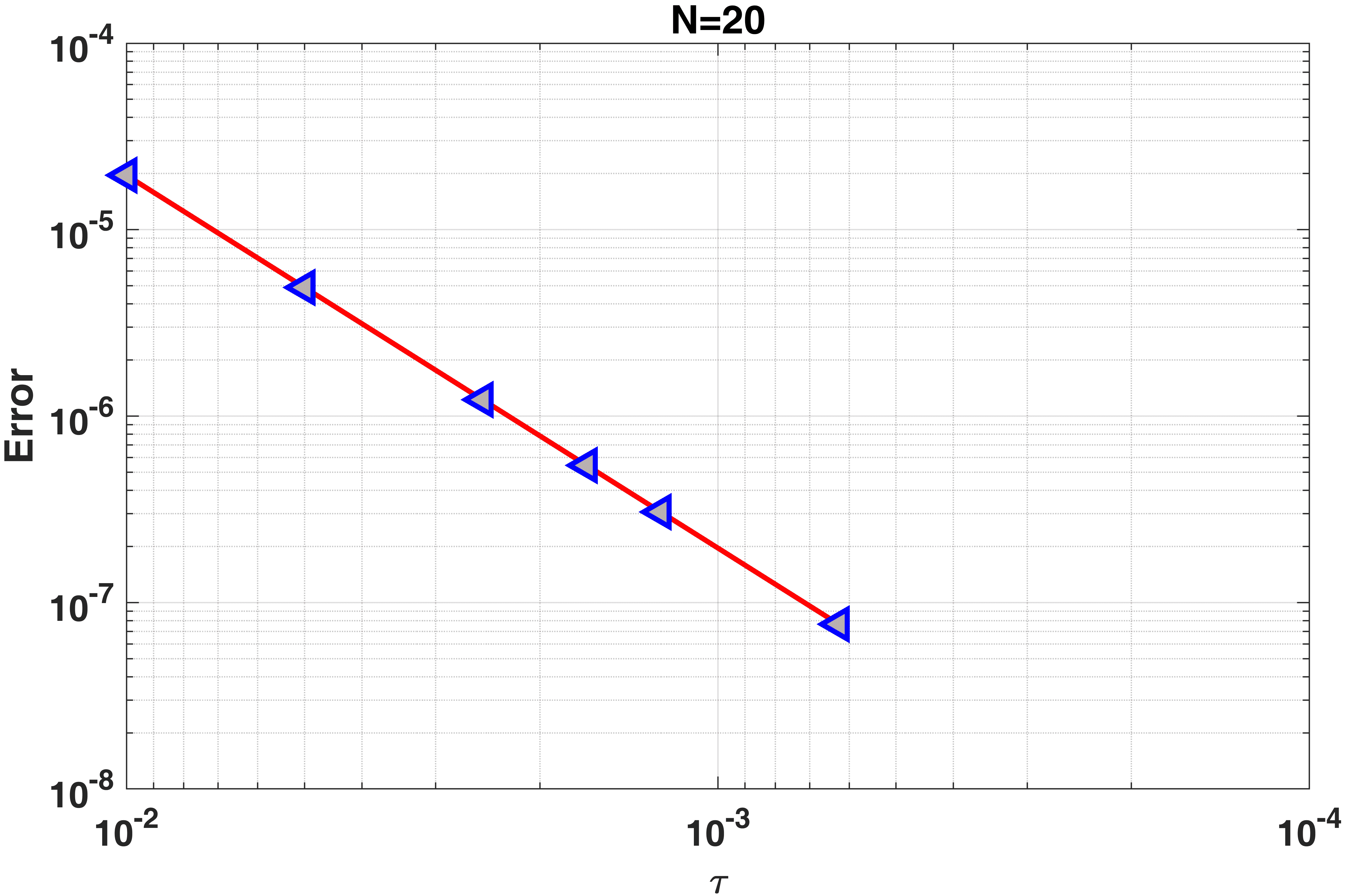}
		\caption{The computational error of expected value of the solution as a function of different number of basis functions  for test problem 2.}\label{Fig12}
	\end{center}
\end{figure}

\begin{figure}[h!]
	\begin{center}
		\vspace{-1.cm}
		\includegraphics[width=8.5cm, height=7cm]{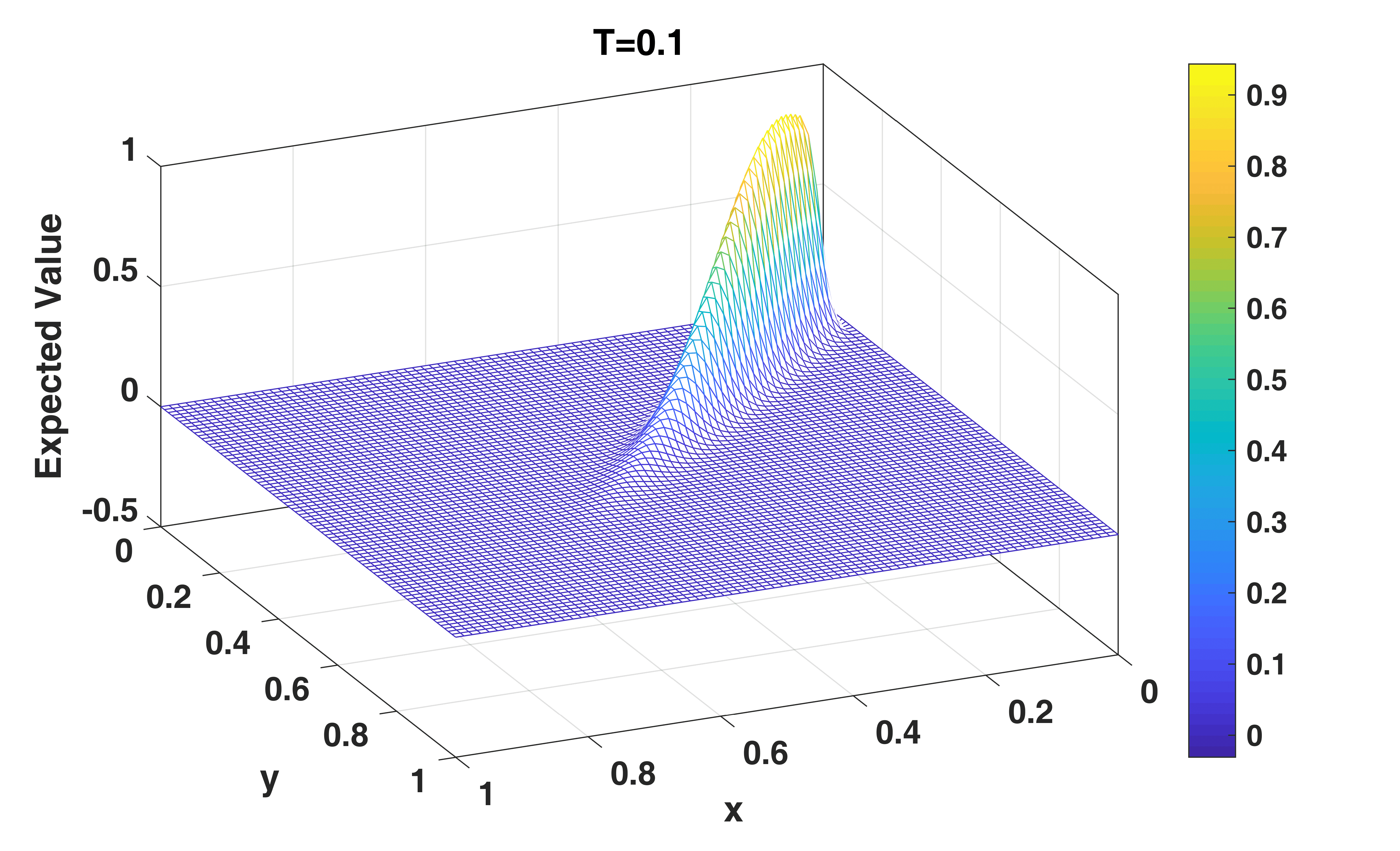}
		\includegraphics[width=8.5cm, height=7cm]{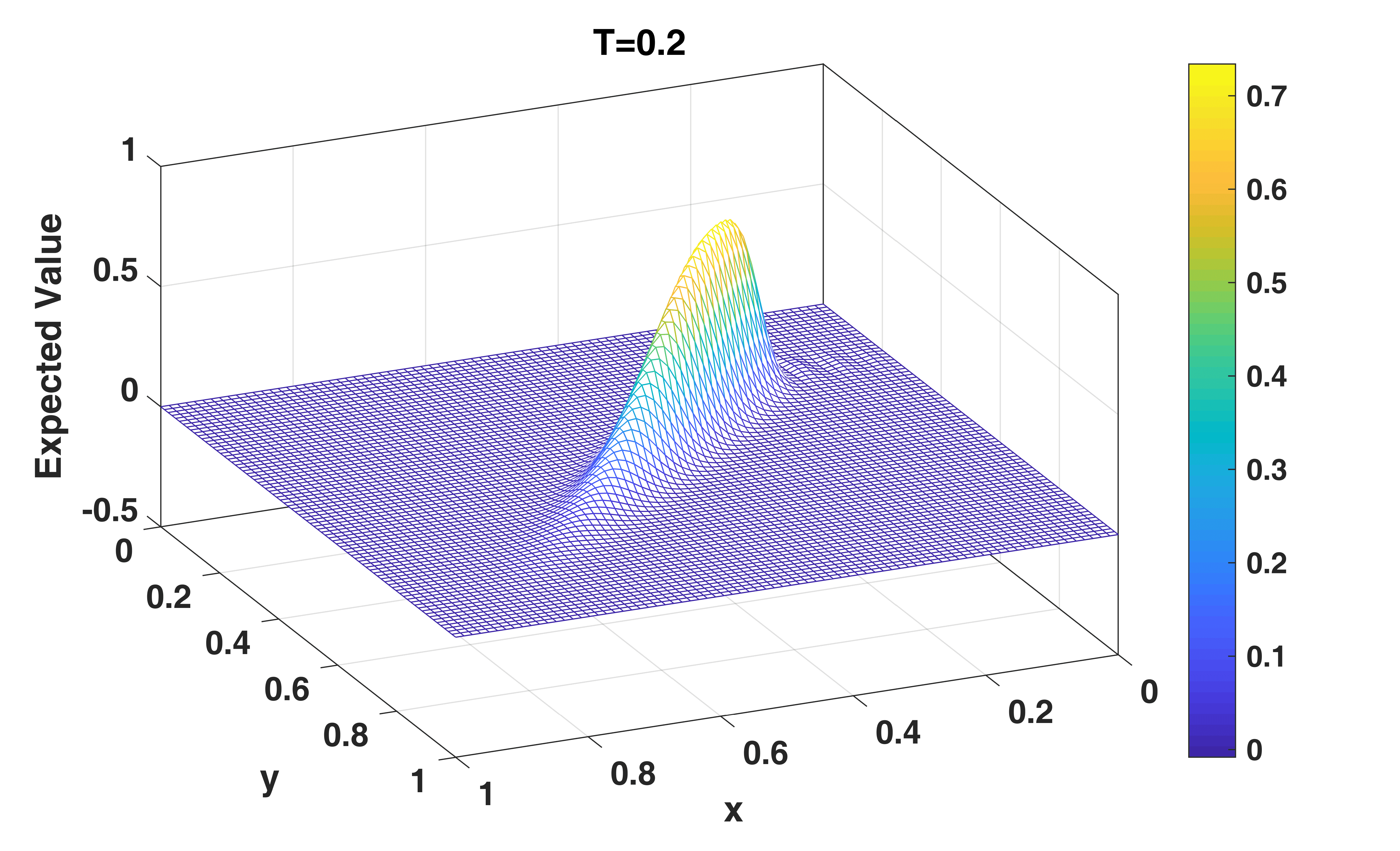}
		\includegraphics[width=8.5cm, height=7cm]{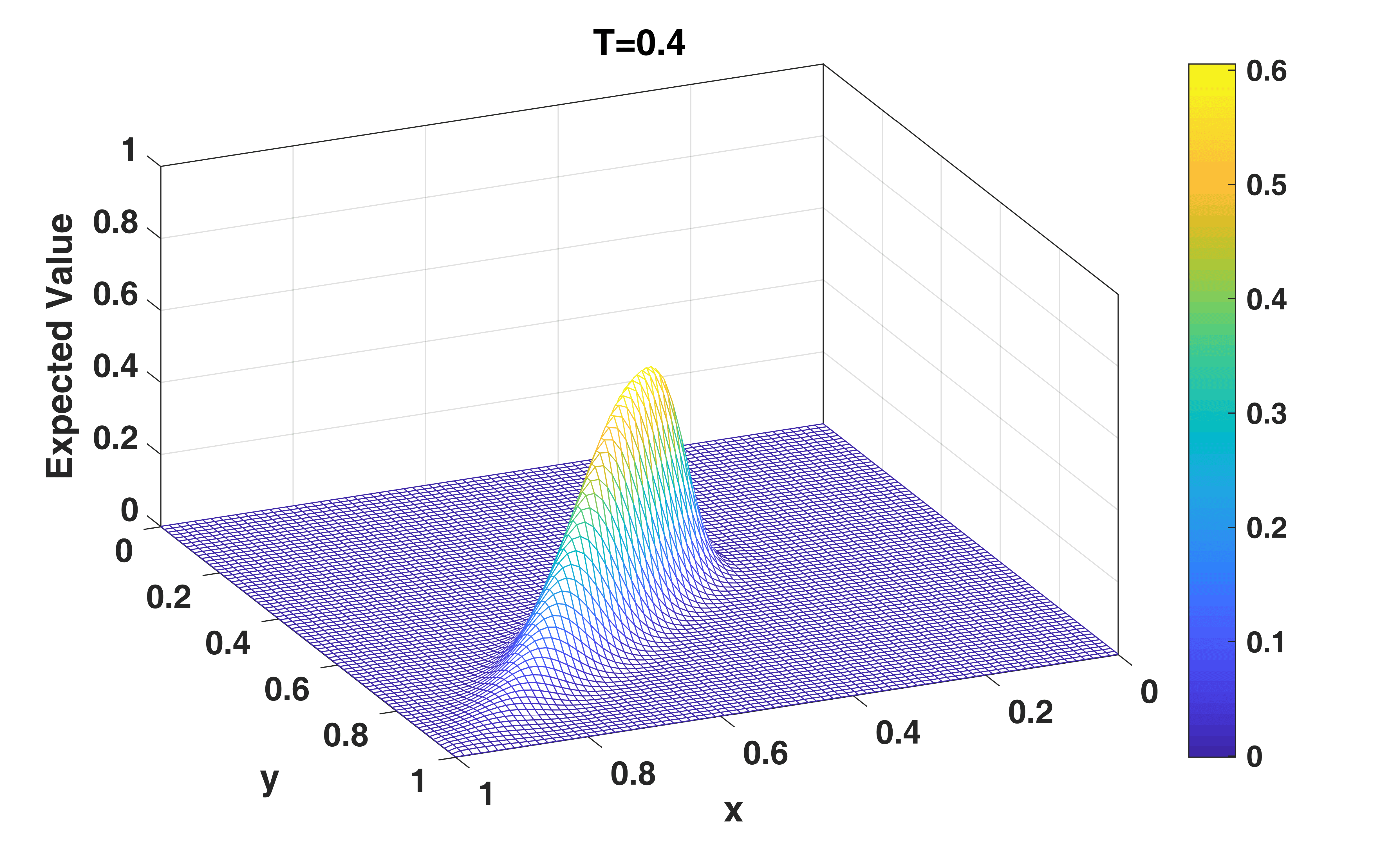}
		\includegraphics[width=8.5cm, height=7cm]{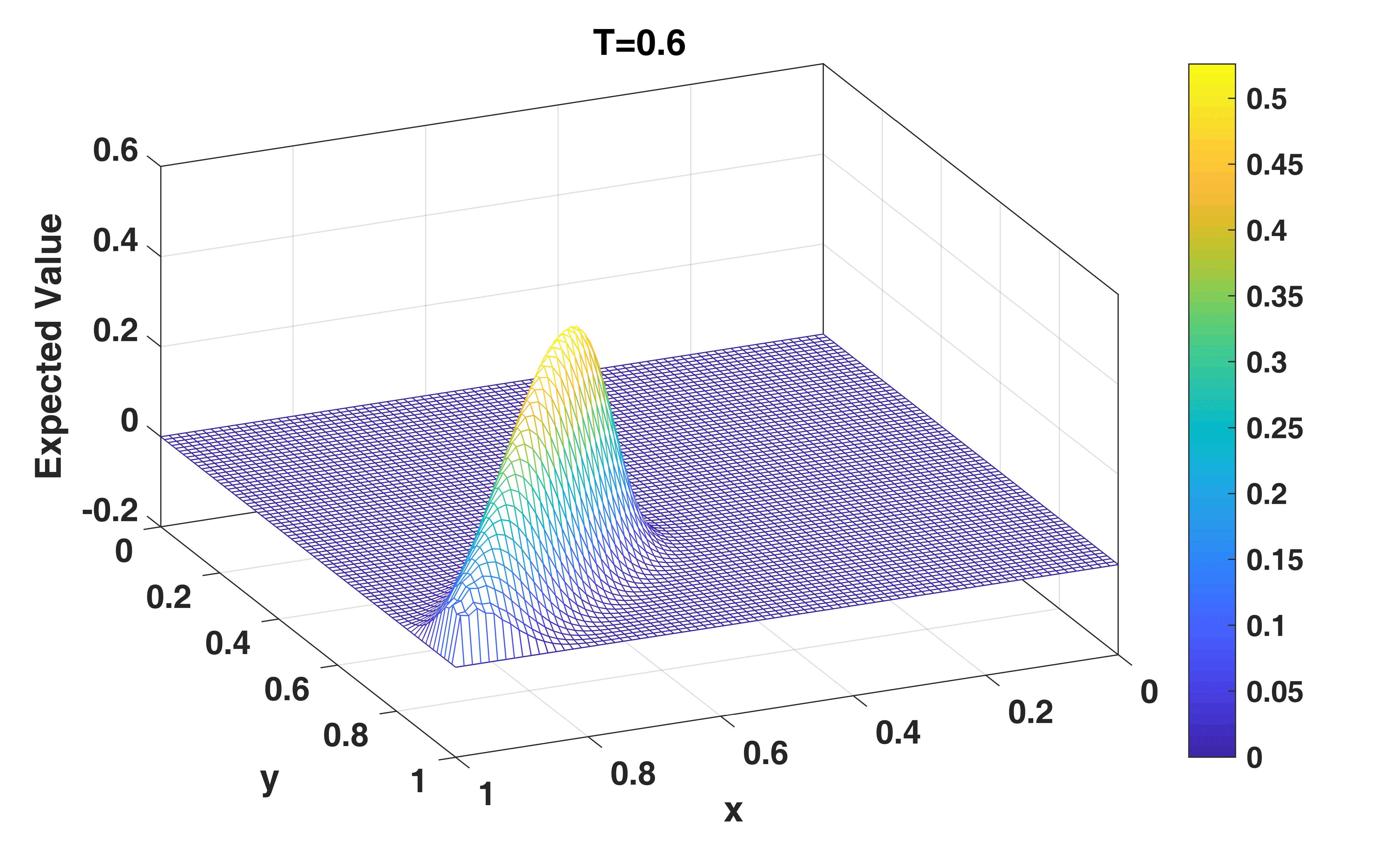}
		\includegraphics[width=8.5cm, height=7cm]{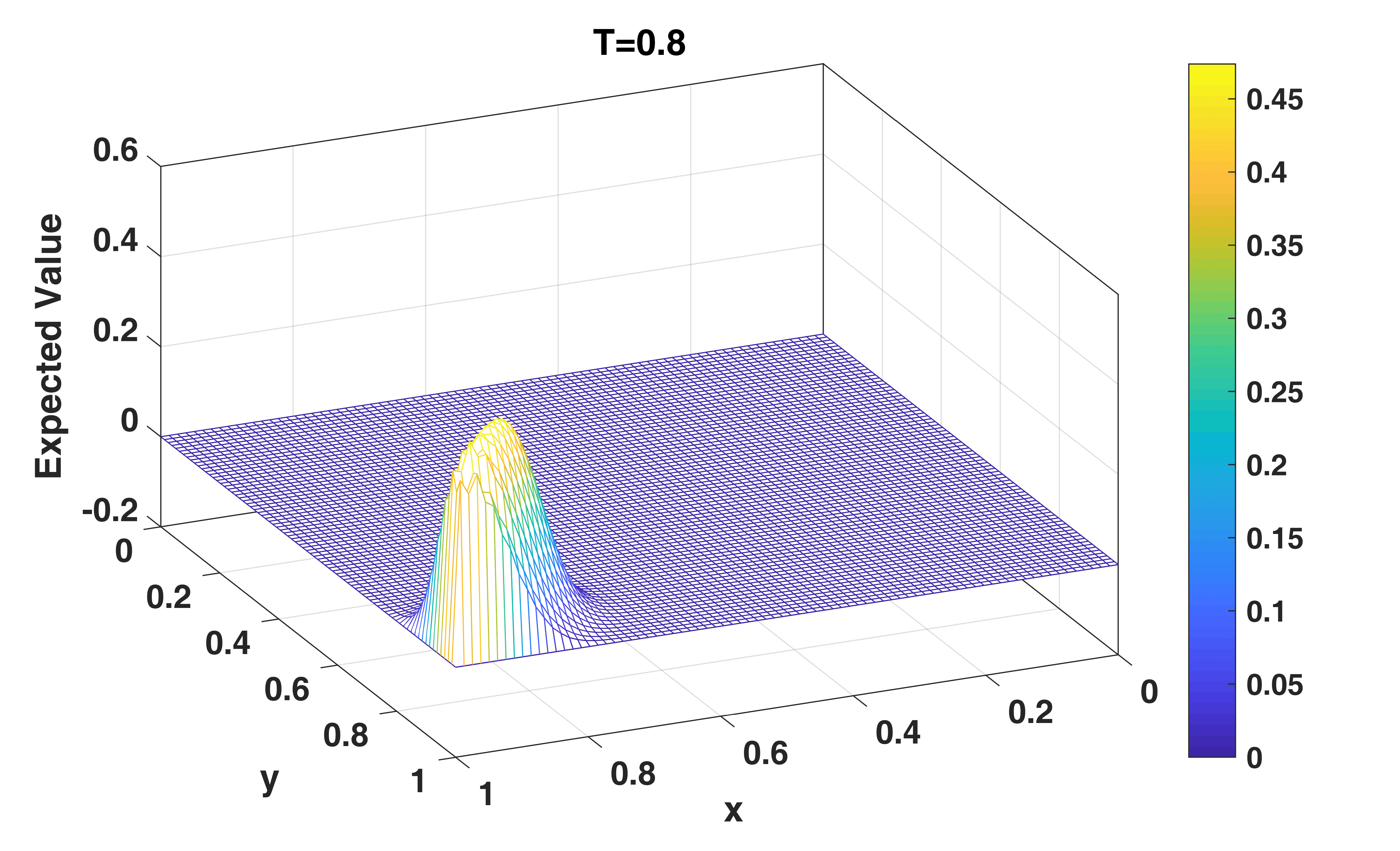}
		\includegraphics[width=8.5cm, height=7cm]{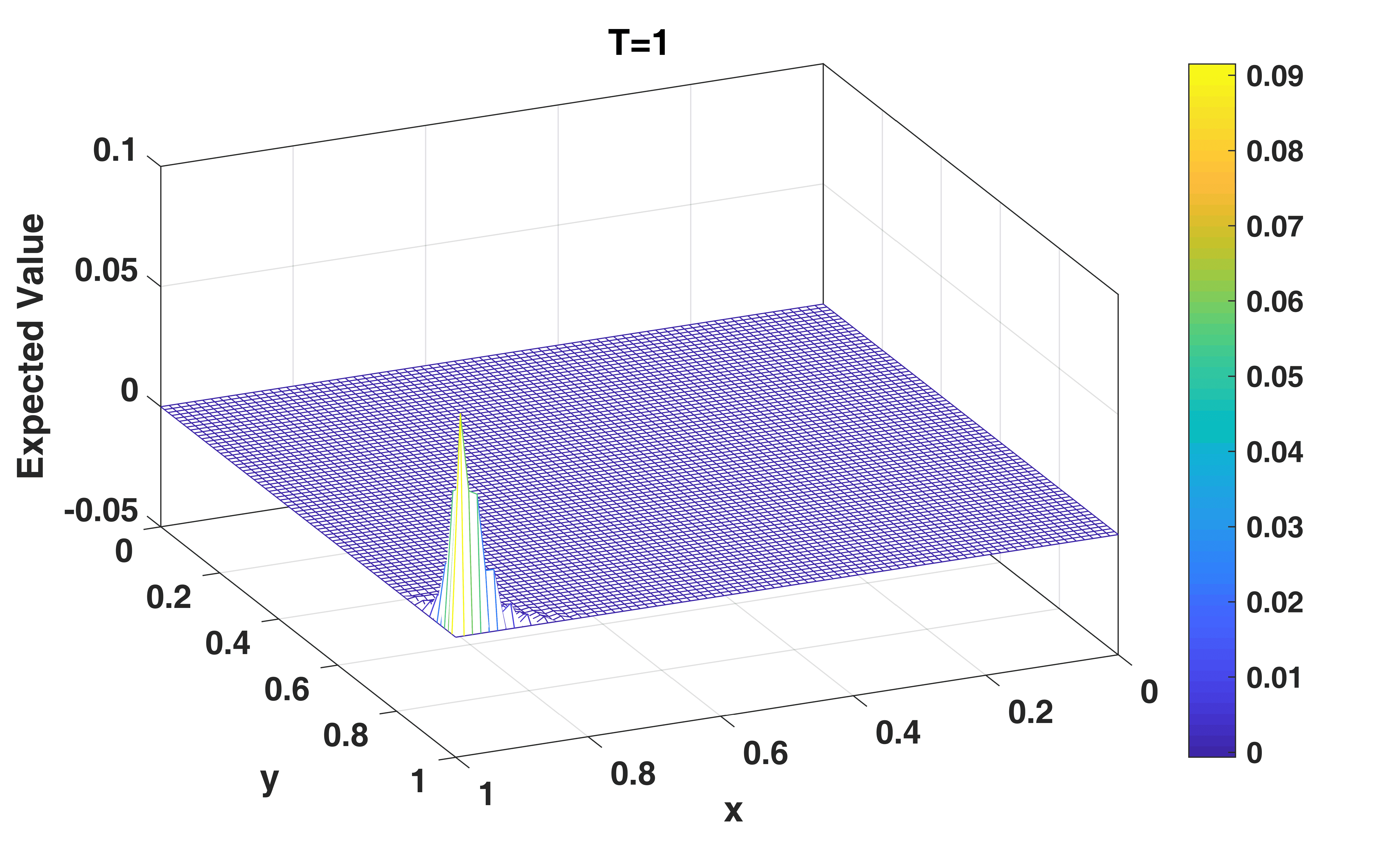}
		\caption{ The evolution of the solution (here $u$) for test problem 2.}\label{Fig3}
	\end{center}
\end{figure}
\begin{center}
	\begin{tabular}{llllllllllll}
		\multicolumn{12}{c}{\footnotesize{\textbf{Table 1}}}\\
		\multicolumn{12}{c}{\footnotesize{\textbf{Errors and computational orders obtained for Test problem 1}}}\\
		\hline\hline
		&\footnotesize{}&&\footnotesize{$N=10$}&&\footnotesize{}&&\footnotesize{$N=20$}&&\footnotesize{}&&\footnotesize{}\\
		\cline{4-6}\cline{8-10}
		&\footnotesize{$tau$}&&\footnotesize{$L_{\infty}$}&&\footnotesize{$C_1$-order}&&\footnotesize{$L_{\infty}$}&&\footnotesize{$C_1$-order}&&\footnotesize{CPU time(s)}\\
		\hline\hline
		&\footnotesize{$1/32$}&&\footnotesize{$1.2863\times10^{-3}$}&&\footnotesize{$-$}&&\footnotesize{$1.2902\times10^{-3}$}&&\footnotesize{$-$}&&\footnotesize{$0.25$}\\
		&\footnotesize{$1/64$}&&\footnotesize{$3.3204\times10^{-4}$}&&\footnotesize{$1.9537$}&&\footnotesize{$3.3304\times10^{-4}$}&&\footnotesize{$1.9538$}&&\footnotesize{$0.39$}\\
		&\footnotesize{$1/128$}&&\footnotesize{$8.3715\times10^{-5}$}&&\footnotesize{$1.9877$}&&\footnotesize{$8.3968\times10^{-5}$}&&\footnotesize{$1.9877$}&&\footnotesize{$1.5$}\\
		&\footnotesize{$1/256$}&&\footnotesize{$5.2520\times10^{-6}$}&&\footnotesize{$1.9976$}&&\footnotesize{$2.1037\times10^{-5}$}&&\footnotesize{$1.9969$}&&\footnotesize{$34$}\\
		&\footnotesize{$1/512$}&&\footnotesize{$1.3150\times10^{-6}$}&&\footnotesize{$1.9978$}&&\footnotesize{$5.2610\times10^{-6}$}&&\footnotesize{$1.9995$}&&\footnotesize{$65$}\\
		\hline\hline
	\end{tabular}
\end{center}
Table 1 and Figure \ref{Fig10} confirm the theoretical results as the computational convergence order of the proposed scheme is closed to the theoretical convergence order. 
\subsection{Test problem 2}\label{E2}
In this second numerical example, we consider a sophisticated example. The initial conditions for the considered example are based on the delta function and $\Omega=[0,1]\times [0,1]$.
 In fact, since the delta function is a discontinuous function, the initial condition is not smooth. We solve this case of groundwater model \cite{W.Liu}  using the proposed numerical procedure.
We investigate the following model
\begin{equation}\label{Exa2}
\left\{ {\begin{array}{*{20}{l}}
	{\displaystyle\frac{{\partial u}}{{\partial t}} + \mu \frac{{\partial u}}{{\partial x}} - D\frac{{{\partial ^2}u}}{{\partial {x^2}}} + \mu \frac{{\partial u}}{{\partial y}} - D\frac{{{\partial ^2}u}}{{\partial {y^2}}} + 0.6{\varpi _p}\frac{{uv}}{{\left( {1 + u} \right)\left( {v + 2} \right)}} =  dW,}\\
	{}\\
	{\displaystyle\frac{{\partial v}}{{\partial t}} + \mu \frac{{\partial v}}{{\partial x}} - D\frac{{{\partial ^2}v}}{{\partial {x^2}}} + \mu \frac{{\partial v}}{{\partial y}} - D\frac{{{\partial ^2}v}}{{\partial {y^2}}} + 0.6{\varpi _p}\frac{{uv}}{{\left( {1 + u} \right)\left( {v + 2} \right)}} =  dW,}\\
	{}\\
	{\displaystyle\frac{{\partial w}}{{\partial t}} + \mu \frac{{\partial w}}{{\partial x}} - D\frac{{{\partial ^2}w}}{{\partial {x^2}}} + \mu \frac{{\partial w}}{{\partial y}} - D\frac{{{\partial ^2}w}}{{\partial {y^2}}} + 0.6{\varpi _p}\frac{{uv}}{{\left( {1 + u} \right)\left( {v + 2} \right)}} + 2w =  dW.}
	\end{array}} \right.
\end{equation}
 In this advection-diffusion equation,  the advection coefficients are $\mu=[1,1]$ the diffusion coefficient is $D=10^{-4}$, and zero Dirichlet boundary conditions are applied. The groundwater model is a system of nonlinear equations that it explains how to remove pollutants of groundwater \cite{W.Liu}.  Now, we consider two initial conditions that they are near to the real world problems as
\begin{equation}\label{Exa3}
u(x,y,0) = v(x,y,0) = w(x,y,0) = x(1 - x)y(1 - y),
\end{equation}
and
\begin{equation}\label{Exa4}
u(x,y,0) = {\mkern 1mu} v(x,y,0) = w(x,y,0) = \delta (0,0).
\end{equation}
Relations \eqref{Exa2} and \eqref{Exa3} are respectively smooth and nonsmooth initial data.  We apply the developed technique to approximate the solution and the physical phenomena (here $u$) using the nonsmooth initial condition. Figure \ref{Fig3} illustrates the expected value of the solution  for the second test problem during different computational time.

\section{Conclusion}
\label{Section5}
The current article presents a new Legendre spectral element technique for solving the   stochastic nonlinear system of advection-reaction-diffusion equations. The main advantage of the proposed numerical procedure is that the derived mass and diffuse matrices have tridiagonal and diagonal forms, respectively.  We proved that the full-discrete scheme is unconditionally stable and convergent.  The computational results
confirm the capability of the present scheme and the theoretical concepts in  our investigation.

\bibliographystyle{elsarticle-num}
\bibliography{Ref}

\end{document}